\theoremstyle:=definition,remark,plain\do{%
        \expandafter\g@addto@macro\csname th@\theoremstyle\endcsname{%
            \addtolength\thm@preskip\parskip
            }%
        }
\newcommand{\tr}{\operatorname{tr}}
\newcommand{\adj}{\operatorname{adj}}
\newcommand{\R}{{\mathbb R}}
\newcommand{\C}{{\mathbb C}}
\newcommand{\Mcal}{{\mathcal M}}
\newcommand{\PD}{\text{PD}}
\newcommand{\PP}{\mathbb{P}}
\newtheorem{theorem}{Theorem}
\newtheorem{lemma}[theorem]{Lemma}
\newtheorem{prop}[theorem]{Proposition}
\newtheorem{cor}[theorem]{Corollary}
\theoremstyle{definition}
\newtheorem{definition}[theorem]{Definition}
\newtheorem{example}[theorem]{Example}
\newtheorem{remark}[theorem]{Remark}
\newtheorem*{conj}{Conjecture}
\newtheorem*{prob}{Problem}
\numberwithin{theorem}{section}
\newcommand{\X}{\mathcal{X}}
\newcommand{\Y}{\mathcal{Y}}
\newcommand{\fc}{h}
\newcommand{\gc}{k}
\newcommand{\ind}{m}
\newcommand{\kval}{M}
\title{Maximum likelihood degree of the two-dimensional linear Gaussian covariance model}
\author{Jane Ivy Coons, Orlando Marigliano, Michael Ruddy}
\date\today
\begin{document}
\maketitle

\begin{abstract}
In algebraic statistics, the maximum likelihood degree of a statistical model is
the number of complex critical points of its log-likelihood function.
A priori knowledge of this number is useful
for applying techniques of numerical algebraic geometry
to the maximum likelihood estimation problem.
We compute the maximum likelihood degree of a generic two-dimensional subspace of the space of $n\times n$ Gaussian covariance matrices. We use the intersection theory of plane curves to show that this number is $2n-3$.
\end{abstract}

\section{Introduction}
A linear Gaussian covariance model is a collection of multivariate Gaussian probability distributions whose covariance matrices are linear combinations of some fixed symmetric matrices. In this paper, we will focus on the \emph{two-dimensional linear Gaussian covariance model}, in which all of the covariance matrices in the model lie in a two-dimensional linear space. Linear Gaussian covariance models were first studied by Anderson in \cite{anderson1970} in the context of the analysis of time series models. They continue to be studied towards this end, for example, in \cite{wu2012}. These models also have applications in a variety of other contexts.

One of the most common types of linear Gaussian covariance models consist of covariance matrices with some prescribed zeros. Given a Gaussian random vector $(X_1, \dots, X_n)$ with mean $\mu$ and positive definite covariance matrix $\Sigma \in \R^{n \times n}$, we can discern independence statements from the zeros in $\Sigma$. In particular, the disjoint subvectors $(X_{i_1}, \dots, X_{i_k})$ and $(X_{j_1}, \dots, X_{j_l})$ are independent if and only if the submatrix of $\Sigma$ that consists of rows $i_1, \dots, i_k$ and columns $j_1, \dots, j_l$ is the zero matrix \cite[Proposition~2.4.4]{sullivant2018}. 

Maximum likelihood estimation for covariance matrices with a fixed independence structure was studied in \cite{chaudhuri2007}. These types of models find applications in the study of gene expression using relevance networks \cite{butte2000}. In these networks, genes are connected with an edge if their expressions are sufficiently correlated. The edges and non-edges in the resulting graph dictate the sparsity structure of the covariance matrix. Problems related to estimation of sparse covariance matrices have been studied in \cite{bien2011} and \cite{rothman2010}.

Linear Gaussian covariance models are also applicable to the field of phylogenetics. In particular, Brownian motion tree models, which model evolution of normally distributed traits along an evolutionary tree, are linear Gaussian covariance models \cite{felsenstein1973}. The covariance matrices of Brownian motion tree models require linear combinations of more than two matrices. However, the authors believe that the results in this paper will find applications to mixtures of Brownian motion tree models. These apply, for example, to models of trait evolution that consider two genes instead of just one \cite{jiang2017}.

Algorithms for computing the maximum likelihood estimate for generic linear Gaussian covariance models have been the subject of much study \cite{anderson1970, anderson1973, bien2011, chaudhuri2007}. Zwiernik, Uhler and Richards have shown that when the number of data points is sufficiently large, maximum likelihood estimation for such models behaves like a convex optimization problem in a large convex region containing the maximum likelihood estimate \cite{zwiernik2017}.

In this paper, we are concerned with computing the maximum likelihood degree of the two-dimensional linear Gaussian covariance model for generic parameters and data. This is the number of complex critical points of the log-likelihood function, and it is considered to be a measurement of the difficulty of computing the maximum likelihood estimate \cite[Table~3]{sturmfels2019}. Knowledge of the ML-degree of a model is important when applying numerical algebraic geometry methods to solve the MLE problem; in particular, it gives a stopping criterion for monodromy methods \cite[Section 5]{sturmfels2019}. For more background on ML-degrees, we refer the reader to \cite{bernd-maximum-likelihood-degree} and \cite[Chapter 2]{drton2008}.

\section{Preliminaries}

Let $n$ be a natural number, and let $\PD_n\subset \mathbb R^{\binom{n+1}{2}}$ be the cone of all $n\times n$ symmetric positive definite matrices. We view $\PD_n$ as the space of covariance matrices of all normal distributions $\mathcal N(0,\Sigma)$ with mean zero.

In algebraic statistics, a Gaussian statistical model is an algebraic subset of $\PD_n$. In this paper, we consider models of the form
\[
\mathcal M_{A,B} = \{xA + yB \mid x,y\in \mathbb R\} \cap \PD_n
\]
for symmetric matrices $A$ and $B$, whenever the intersection is not empty. That is, $\mathcal M_{A,B}$ is the intersection of the positive definite cone with the linear span of $A$ and $B$. We call $\mathcal M_{A,B}$ the \emph{two-dimensional linear Gaussian covariance model} with respect to $A$ and $B$.

Given independent, identically distributed (i.i.d.)\ samples $u_1, \dots, u_r \in \R^n$ from some normal distribution, the maximum likelihood estimation problem for $\mathcal M_{A,B}$ is to find a covariance matrix $\hat\Sigma\in \Mcal_{A,B}$, if one exists, that maximizes the value of the likelihood function
\[
	L(\Sigma\mid u_1,\dotsc, u_r) = \prod_{i=1}^r f_{\Sigma}(u_i),
\]
where $f_\Sigma$ is the density of $\mathcal N(0,\Sigma)$. Let $S$ denote the \emph{sample covariance matrix}
\[
S = \frac{1}{r} \sum_{i=1}^r u_i u_i^T.
\]
Since for all $\Sigma$ the value $L(\Sigma\mid u_1,\dotsc, u_r)$ only depends on $S$, we identify the data given by $r$ i.i.d.\ samples from a normal distribution with their sample covariance matrix $S$. The logarithm is a concave function, so the maximizer of the likelihood function is also the maximizer of its natural log, the \emph{log-likelihood function}. This function can be written in terms of $S$:
\begin{align*}
\ell(\Sigma \mid S) & := \log L(\Sigma \mid S)\\
&= -\frac{rn}{2} \log(2 \pi) - \frac{r}{2} \log \det (\Sigma) -\frac{r}{2} \text{tr}(S \Sigma^{-1}).
\end{align*}
Note that the maximizer of this function is equal to the minimizer of
\[
\tilde\ell(\Sigma \mid S) := \log \det (\Sigma) + \text{tr}(S \Sigma^{-1}).
\]
When we restrict to the model $\Mcal_{A,B}$, we require that $\Sigma = xA + yB$ for some $x,y \in \mathbb{R}$ such that $xA + yB$ is positive definite. So the maximum likelihood estimation problem in this case is equivalent to
\[
\begin{aligned}
& \underset{x,y}{\text{argmin}}
& & \tilde\ell(xA + yB \mid S) \\
& \text{subject to}
& & xA + yB \in \PD_n.
\end{aligned}
\]
To find local extrema of the log-likelihood function, we set its gradient equal to $0$ and solve for $x$ and $y$. The two resulting equations are called the score equations. 

\begin{definition}\label{def:mle}
The \emph{score equations} for $\mathcal M_{A,B}$ are the partial derivatives of the function $\tilde\ell(xA+yB \mid S)$ with respect to $x$ and $y$. The \emph{maximum likelihood degree} or \emph{ML-degree} of $\mathcal M_{A,B}$ is the number of complex solutions to the score equations, counted with multiplicity, for a generic sample covariance matrix $S$.
\end{definition}

Definition~\ref{def:mle} makes reference to a \emph{generic} sample covariance matrix. We give a detailed explanation of this term from algebraic geometry at the end of this section.

One benefit of working with $\tilde\ell$ is that the score equations are rational functions of the data. This allows us to use tools from algebraic geometry to analyze their solutions. Let $\Sigma = xA + yB$. For the sake of brevity, we will denote $P(x,y) = \det \Sigma$ and $T(x,y) = \tr (S \adj \Sigma)$, where $\adj \Sigma$ is the classical adjoint.
With this notation, the function $\tilde\ell$ takes the form
\[
	\tilde\ell(\Sigma\mid S) = \log P + \frac{T}{P}.
\]
Accordingly, the score equations are
\begin{align*}
\tilde\ell_x(x,y) &= \frac{P_x}{P} + \frac{P T_x - T P_x}{P^2} \\
\tilde\ell_y(x,y) &= \frac{P_y}{P} + \frac{P T_y - T P_y}{P^2}.
\end{align*}
Here and throughout, the notation $h_x$ is used for the derivative of a function $h$ with respect to the variable $x$.
We are concerned with values of $(x,y) \in \C^2$ where both of the score equations are zero. We clear denominators by multiplying $\tilde\ell_x$ and $\tilde\ell_y$ by $P^2$ to obtain two polynomials,
\begin{align}\label{Eqn:fg}
f(x,y) &:= P P_x + P T_x - T P_x \nonumber \\
g(x,y) &:= P P_y + P T_y - T P_y.
\end{align}
We note the \emph{degrees} of each relevant term for generic $A$, $B$ and $S$. Specifically, their total degree with respect to their variables $x$ and $y$ are:
\begin{align*}
\deg P &= n \\
\deg P_x = \deg P_y = \deg T &= n-1 \\
\deg T_x = \deg T_y & = n-2.
\end{align*}
A polynomial $h$ is called a \emph{homogeneous form} if each of its terms has the same degree. The polynomials $f$ and $g$ can be written as a sum of a homogeneous degree $2n-1$ form with a homogeneous degree $2n-2$ form. 

The critical points of $\tilde\ell$ are in the variety $V(f,g)$. However, this variety also contains points at which $\tilde\ell$ and the score equations are not defined since we cleared denominators. The ideal whose variety is exactly the critical points of $\tilde\ell$ is the saturation
\begin{align*}
J & = \mathcal{I}(f,g) : \langle P \rangle^{\infty} \\
&:= \{h \in \C[x,y] \mid h P^N \in \mathcal{I}(f,g) \text{ for some } N\}
\end{align*}
Saturating with $P = \det \Sigma$ removes all points in $V(f,g)$ where the determinant is zero and $\tilde\ell$ is undefined. For more details on the geometric content of saturation, we refer the reader to Chapter 7 of \cite{sullivant2018}.  We will show that $\mathcal{I}(f,g)$ and hence $J$ are zero-dimensional in Lemmas \ref{Lem:Irreducible} and \ref{Lem:NotConstantMults}. The ML-degree of the model is hence the degree of $J$. This is the number of isolated points in the variety of $J$ counted with multiplicity. For more background on degrees of general varieties,  see \cite[Lec.\ 13]{harris} and \cite[Ch.\ 4, Sec.\ 1.4]{shafarevich2013}.

We now state the main result and offer an outline for its proof, which we follow in the remaining sections.

\begin{theorem}\label{Thm:Main}
For generic $n \times n$ symmetric matrices $A$ and $B$, the maximum likelihood degree of the two-dimensional linear Gaussian covariance model $\Mcal_{A,B}$ is $2n-3$.
\end{theorem}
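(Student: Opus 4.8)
The plan is to homogenize $f$ and $g$, apply Bézout's theorem in $\PP^2$, and then subtract the intersection that is discarded by the saturation with respect to $P$.

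First note that, since every entry of $\Sigma=xA+yB$ is linear in $x,y$, the determinant $P=\det\Sigma$ is a homogeneous form of degree $n$ and $T=\tr(S\adj\Sigma)$ is homogeneous of degree $n-1$. Hence $f=PP_x+(PT_x-TP_x)$ and $g=PP_y+(PT_y-TP_y)$ are each a sum of a homogeneous degree-$(2n-1)$ form ($PP_x$, resp.\ $PP_y$) and a homogeneous degree-$(2n-2)$ form $U:=PT_x-TP_x$, resp.\ $V:=PT_y-TP_y$. Homogenizing with a new variable $z$ produces degree-$(2n-1)$ forms $F,G$ on $\PP^2$. Assuming $f$ and $g$ are coprime (the content of Lemmas~\ref{Lem:Irreducible} and~\ref{Lem:NotConstantMults}), Bézout gives $\sum_{p}I_p(F,G)=(2n-1)^2$. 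Because saturating $\mathcal{I}(f,g)$ by $P$ removes exactly the zero-dimensional components supported on $V(P)$, the ML-degree equals this total minus the local multiplicities at the common zeros of $F,G$ lying on $V(P)$.

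I would then locate those points. On $V(P)$ one computes $F\equiv-zTP_x$ and $G\equiv-zTP_y$, so every common zero on $V(P)$ satisfies $z=0$, or $T=0$, or $P_x=P_y=0$. Since $P$ is homogeneous in $x,y$ alone, $V(P)$ is a union of $n$ lines through $[0:0:1]$; for generic $A,B$ the pencil $xA+yB$ has distinct generalized eigenvalues, so these lines are distinct, $V(P)$ is smooth away from $[0:0:1]$, and it meets the line at infinity in $n$ distinct points $p_1,\dots,p_n$. For generic $S$ the forms $P$ and $T$ share no linear factor, so $V(P)\cap V(T)=\{[0:0:1]\}$ and $T(p_i)\neq0$ for each $i$. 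Consequently $V(F)\cap V(G)\cap V(P)=\{p_1,\dots,p_n,[0:0:1]\}$, and the theorem reduces to computing the local multiplicities at these $n+1$ points.

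At $p_i$ I would use the identity $P_yF-P_xG=zPW$ with $W:=P_yT_x-P_xT_y$, verified by direct expansion. As $p_i$ is a simple root of $P$, one of $P_x(p_i),P_y(p_i)$ is a unit in the local ring, so $\langle F,G\rangle=\langle zPW,G\rangle$ there; additivity over the factors $z$, $P$, $W$ yields $I_{p_i}(F,G)=I_{p_i}(z,G)+I_{p_i}(P,G)+I_{p_i}(W,G)=1+1+0=2$, the last term vanishing since $W(p_i)\neq0$ for generic data. At $[0:0:1]$, in the chart $z=1$, both $f$ and $g$ have initial form of degree $2n-2$, namely $U$ and $V$; examining $\partial_x(T/P)$ and $\partial_y(T/P)$ and using that $T$ is squarefree with $P,T$ coprime shows $U$ and $V$ have no common factor, so $I_{[0:0:1]}(F,G)=(2n-2)^2$. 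Subtracting gives $\deg J=(2n-1)^2-2n-(2n-2)^2=2n-3$.

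I expect the main obstacle to be the coprimality of the initial forms $U$ and $V$ at $[0:0:1]$, together with the genericity inputs $W(p_i)\neq0$ and the coprimality of $P$ and $T$; these are precisely where the hypothesis of generic $A,B,S$ enters, and pinning them down rigorously is the heart of the argument handled by the preliminary lemmas.
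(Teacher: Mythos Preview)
Your overall architecture is identical to the paper's: homogenize, apply B\'ezout to get $(2n-1)^2$, identify the extraneous intersection as the origin $[0{:}0{:}1]$ together with the $n$ points $p_i$ at infinity on $V(P)$, show $I_{[0:0:1]}(F,G)=(2n-2)^2$ via coprimality of the initial forms $U,V$, and show each $I_{p_i}(F,G)=2$. The paper proves $\gcd(U,V)=1$ exactly as you outline (Euler gives $xU+yV=-PT$, then case-split on whether a common factor divides $P$ or $T$), and your list of required genericity inputs matches Lemma~\ref{Lem:EmptyVarieties} and Lemma~\ref{Lem:EmptyVarietyProjPoints}.

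The one place you genuinely diverge is the computation of $I_{p_i}(F,G)$. The paper works analytically: it builds power-series parametrizations of the branches of $F$ and $G$ at $p_i$, shows their first-order terms agree, and then verifies (with a \emph{Maple}-assisted simplification) that the second-order terms differ, yielding contact order one and hence multiplicity two (Lemma~\ref{Lem:MultTwo}). Your route via the syzygy $P_yF-P_xG=zPW$ is cleaner: after inverting the nonvanishing partial of $P$ you reduce $\langle F,G\rangle$ to $\langle zPW,G\rangle$ in $\mathcal O_{p_i}$ and read off $1+1+0$ from the factors, using only that $W(p_i)\neq 0$ (precisely the content of Lemma~\ref{Lem:EmptyVarietyProjPoints}) and $T(p_i)\neq 0$. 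This avoids series expansions and the symbolic computation entirely, at the cost of needing the standard additivity/unit properties of $I_q$. One small point to tidy: when you write $\langle F,G\rangle=\langle zPW,G\rangle$ you are implicitly assuming $P_y$ is the unit; if instead $P_x$ is the nonvanishing partial you get $\langle F,zPW\rangle$ and compute $I_{p_i}(F,z)+I_{p_i}(F,P)+I_{p_i}(F,W)$, which gives the same $1+1+0$ by symmetry.
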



A key tool used in the proof of Theorem \ref{Thm:Main} is B\'ezout's theorem, a proof of which can be found in Chapter 5.3 of \cite{fulton1989}.

\begin{theorem}[B\'ezout's Theorem] \label{Thm:Bezout}
Let $H$ and $K$ be projective plane curves of degrees $d_1$ and $d_2$ respectively. 
Suppose further that $H$ and $K$ share no common component. 
Then the intersection of $H$ and $K$ is zero-dimensional and
the number of intersection points of $H$ and $K$, counted with multiplicity, is $d_1 d_2$.
\end{theorem}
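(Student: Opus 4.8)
The plan is to prove Bézout's Theorem by relating the total intersection number to the Hilbert function of the homogeneous coordinate ring of $H \cap K$, following the commutative-algebra approach of Fulton. Write $F$ and $G$ for homogeneous forms in $R = \C[X,Y,Z]$ of degrees $d_1$ and $d_2$ cutting out $H$ and $K$, and recall that the intersection multiplicity at a point $P$ is defined locally as $I(P, H\cap K) = \dim_\C \mathcal{O}_P/(f,g)$, where $f,g$ are dehomogenized local equations and $\mathcal{O}_P$ is the local ring of the plane at $P$. The goal is to show first that $H \cap K$ is finite and then that $\sum_P I(P, H \cap K) = d_1 d_2$.

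First I would establish finiteness. If $\dim(H\cap K) \geq 1$, then $H\cap K$ would contain an irreducible curve $C$, whose defining form would divide both $F$ and $G$ since $\C[X,Y,Z]$ is a unique factorization domain, contradicting the hypothesis that $H$ and $K$ share no common component. Hence $H \cap K$ is a finite set of points, which is the zero-dimensionality claim. Next, after a projective change of coordinates I may assume that no intersection point lies on the line $Z = 0$, so that all of $H \cap K$ lies in the affine chart $Z \neq 0$; this forces the class of $Z$ to be a non-zero-divisor in $R/(F,G)$. The normalization is legitimate because the finitely many intersection points impose finitely many conditions, all avoidable by a suitable choice of the line at infinity.

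The algebraic heart of the argument is the Koszul resolution. Because $F$ and $G$ have no common factor they form a regular sequence in $R$, and the only syzygy between them is the trivial one, giving the exact sequence of graded modules
\[
0 \longrightarrow R(-d_1-d_2) \xrightarrow{\ (G,\,-F)\ } R(-d_1)\oplus R(-d_2) \xrightarrow{\ (F,\,G)\ } R \longrightarrow R/(F,G) \longrightarrow 0.
\]
Taking dimensions of the degree-$d$ graded pieces and using $\dim_\C R_d = \binom{d+2}{2}$, the alternating sum telescopes, and for every $d \geq d_1 + d_2$ the four binomial terms combine to give $\dim_\C (R/(F,G))_d = d_1 d_2$, a constant independent of $d$. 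Finally I would identify this stabilized value with the geometric intersection number: since multiplication by $Z$ is injective on $R/(F,G)$, passing to the chart $Z \neq 0$ shows that for large $d$ the graded piece $(R/(F,G))_d$ is isomorphic to $\C[x,y]/(f,g)$, whose dimension is $\sum_P \dim_\C \mathcal{O}_P/(f,g) = \sum_P I(P, H\cap K)$ by the structure theorem for Artinian rings over the finitely many points. Combining the two computations yields $\sum_P I(P, H\cap K) = d_1 d_2$.

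The main obstacle is establishing exactness of the Koszul sequence, specifically that $F,G$ is a regular sequence and that every syzygy $AF + BG = 0$ is a multiple of $(G,-F)$, since this is precisely where the no-common-component hypothesis does its work through unique factorization. A secondary technical point is the passage from the graded dimension to the sum of local multiplicities, which requires the coordinate normalization guaranteeing that $Z$ is a non-zero-divisor and that no intersection point escapes to the line at infinity; both of these should be set up carefully before the Hilbert function computation.
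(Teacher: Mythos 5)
The paper does not prove B\'ezout's Theorem itself; it imports it as a known result, citing Chapter~5.3 of Fulton's \emph{Algebraic Curves} for the proof. Your proposal is correct and is essentially that cited proof: the finiteness argument via unique factorization, the choice of coordinates making $Z$ a non-zero-divisor on $R/(F,G)$, the Koszul resolution giving $\dim_\C (R/(F,G))_d = d_1 d_2$ for large $d$, and the identification of this stabilized Hilbert function value with $\sum_P \dim_\C \mathcal{O}_P/(f,g)$ are exactly the steps of Fulton's argument, and the two technical points you flag (exactness of the Koszul sequence and the $Z$-regularity needed to pass to the affine chart) are the right ones to set up carefully.
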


Let $F(x,y,z)$ and $G(x,y,z)$ denote the homogenizations of $f$ and $g$ with respect to $z$. 
Then $F$ and $G$ both define projective plane curves of degree $2n-1$. 
Lemmas~\ref{Lem:Irreducible} and~\ref{Lem:NotConstantMults}
will show that $F$ and $G$ do not share a common component.
So we can apply B\'ezout's Theorem to count their intersection points.

Let $q=[x:y:z]$ be a point in $\C\PP^2$. Then by B\'ezout's theorem,
$$
(2n-1)^2=\sum_{q\in V(F,G)} I_q(F,G),
$$
where $I_q(F,G)$ denotes the \emph{intersection multiplicity} of $F$ and $G$ at $q$.
The definition and properties of the intersection multiplicity of a pair of algebraic curves at a point can be found in~\cite[Sec.\ 3, Thm.\ 3]{fulton1989}. For affine points $(x,y)\in V(f,g)$ we sometimes denote the intersection multiplicity as $I_{(x,y)}(f,g):=I_{[x:y:1]}(F,G)$.

We show in Proposition \ref{Prop:SaturateOrigin} that saturating the ideal $\mathcal{I}(f,g)$ with $\det \Sigma$ corresponds to removing only the origin from the affine variety of $f$ and $g$.
This in turn corresponds to removing the point
$[0{\,:\,}0{\,:\,}1]$
from the projective variety $V(F,G)$.
Since we are only interested in affine intersection points of $F$ and $G$ outside of the origin, we split the sum on the right-hand side of the above equation as follows:
\begin{equation}\label{Eq:IntMultSum}
(2n-1)^2=I_{[0:0:1]}(F,G)+
\sum_{\substack{q\in V(F,G)\\
q\notin \{[0:0:1]\} \cup V(F,G,z)}} I_q(F,G)
+\sum_{q\in V(F,G,z)} I_q(F,G).
\end{equation}
The middle term of the right-hand side of \eqref{Eq:IntMultSum} is exactly the degree of the saturated ideal $J = \mathcal{I}(f,g) : \langle \det \Sigma \rangle^{\infty}$.
Thus one can find the degree of $J$ by computing the intersection multiplicities of $F$ and $G$ at the origin and at their intersection points at infinity. We compute the former in Section~\ref{Sec:MultiplicityOfTheOrigin} and the latter in Section~\ref{Sec:MultiplicityAtInfinity} to obtain
\[
I_{[0:0:1]}(F,G)=(2n-2)^2 \quad \text{and}\quad 
\sum_{q\in V(F,G,z)} I_q(F,G)=2n
\]
for generic $A, B$ and $S$.
Thus, by rearranging \eqref{Eq:IntMultSum},
\begin{align*}
\sum_{\substack{q\in V(F,G)\\q\notin \{[0:0:1]\}\cup V(F,G,z)}}
I_{q}(F,G)
&=(2n-1)^2-(2n-2)^2-2n = 2n-3,
\end{align*}
which implies $\deg(J)=2n-3$.

\begin{example}\label{expl}
Let $n=3$ and consider the model $\mathcal M_{A,B}$ defined by the positive definite matrices
\begin{align*}
A = \begin{pmatrix}
5 & 1 & 0 \\
 1 & 3 & -2 \\
0 & -2 & 6
\end{pmatrix}, \quad
B = \begin{pmatrix}
1 & -1 & 0 \\
-1 & 6 & -2 \\
0 & -2 & 1
\end{pmatrix}.
\end{align*}
Using the \textit{Julia} software package \texttt{LinearCovarianceModels.jl} \cite{sturmfels2019} we find that the maximum likelihood degree of $\mathcal{M}_{A,B}$ is indeed $2\cdot 3-3 = 3$, meaning that for a generic sample covariance matrix there will be three solutions over the complex number to the score equations. If we take the sample covariance matrix,
\begin{align*}
S &= \begin{pmatrix}
1 & 2 & -2 \\
2 & 6 & -7 \\
-2 & -7 & 9
\end{pmatrix},
\end{align*}
then the equations $f,g$ from \eqref{Eqn:fg} for $\mathcal M_{A,B}$ and $S$ are given explicitly by
\begin{align*}
f = 12288x^5+57600x^4y+74272x^3y^2+20172x^2y^3+1729xy^4+37y^5\\
-10496x^4-33792x^3y-45484x^2y^2-7232xy^3-513y^4
\end{align*}
and
\begin{align*}
g = 11520x^5+37136x^4y+20172x^3y^2+3458x^2y^3+185xy^4+3y^5\\
-12624x^4-9448x^3y-6480x^2y^2-528xy^3-21y^4.
\end{align*}
We used the numerical polynomial solver package \texttt{HomotopyContinuation.jl} \cite{Jul:HC} to find the solutions to the system of equations $f=0,g=0$. The solution set consisted of the origin (with multiplicity 16) and three points corresponding to the critical points of the log-likelihood function,
\begin{align*}
\{ (0.6897, 0.1773), (0.2655 + &0.3071 i, 0.9865 - 2.4601 i),\\
&(0.2655 - 0.3071 i, 0.9865 + 2.4601 i)\}.
\end{align*}
The number of critical points and the multiplicity at the origin are predicted by Theorem~\ref{Thm:Main} and Corollary~\ref{Thm:OriginMult} respectively. This fits into Equation~\eqref{Eq:IntMultSum}, which for $n=3$ becomes $5^2 = 16 + 3 + 6$.
Thus the maximum likelihood estimate for $\mathcal{M}_{A,B}$ and $S$ is the real point in the list above, which corresponds to the positive definite covariance matrix
\[
\Sigma = \begin{pmatrix}
 3.6257 &  0.5124 & 0 \\
0.5124 & 3.1329 & -1.7340 \\
0 &  -1.7340 & 4.3154
\end{pmatrix}
\]
that maximizes the likelihood function $L(\Sigma\mid S)$.
\end{example}

\paragraph{Properties that hold generically.}\label{generic-properties}

In Example~\ref{expl}, it was important to choose the matrices $A,B,S$ to be ``generic enough.'' We explain the precise notion of genericity in classical algebraic geometry below.

Let $X$ be an algebraic variety and $\mathcal P$ a property of the points of $X$. One says that $\mathcal P(x)$ \emph{holds for generic} $x\in X$, or \emph{holds generically} on $X$, if there exists a non-empty Zariski open set $U$ of $X$ such that $\mathcal P(x)$ holds for all $x\in U$.

Consider the case $X=\mathbb C^N$. A Zariski open set in $\mathbb C^N$ is the complement of a set $V = V(f_1,\dotsc,f_k)$ of common zeros of a collection of polynomials $f_1,\dotsc,f_k$ in $N$ variables.

Thus, to verify that some property $\mathcal P$ holds generically on $\mathbb C^N$, we first have to find such a set $V$ with the property that for all $x$, if $\mathcal P(x)$ does not hold then $x\in V$. This verifies that $\mathcal P(x)$ holds for all $x\in U$, where $U = \mathbb C^N \setminus V$. We also have to verify that $U$ is non-empty, which amounts to finding a specific element $x_0$ such that $x_0\not \in V$.

Note that $\dim V$ is at most $N-1$, which justifies the term ``generic''. In particular it is expected that a point $x\in \mathbb C^N$ taken at random\footnote{Say, according to the multivariate normal distribution} will lie in $U$.

Suppose that $\mathcal Q$ is another property of the points of $X$ and we want to show that both $\mathcal P(x)$ and $\mathcal Q(x)$ hold generically on $X$. Then it is enough to show separately that $\mathcal P(x)$ holds for generic $x$ and that $\mathcal Q(x)$ holds for generic $x$. This follows from the fact that the intersection of two non-empty Zariski open sets $U_1,U_2$ is always a nonempty Zariski open set. In practice, this means that after finding $U_1$ and $U_2$ it is enough to find two separate elements $x_1\in U_1$ and $x_2\in U_2$, which could be easier than finding an element $x_0\in U_1\cap U_2$.

In this article, the notion of a property holding generically is important for two reasons.
First, it is needed for the definition of the ML-degree. Indeed, the number (with multiplicities) of solutions $(\hat{x},\hat{y})$ to the score equations $\tilde\ell_i(x,y)$ given an empirical covariance matrix $S$ could vary with $S$. Nevertheless, it is constant for generic $S$, which justifies the use of a single number.
Second, we consider a family of models $\mathcal M_{A,B}$ parametrized by pairs of symmetric matrices $(A,B)$ and compute its ML-degree only for generic $A,B$. To perform the computation, we use several properties that hold for generic $A,B$ and $S$. We prove this separately for each one of them and use them together at the same time, as explained above.

\section{Geometry of the Score Equations}

In this section, we use B\'ezout's Theorem to derive a formula for computing $\deg (J)$. Lemma \ref{Lem:EmptyVarieties} will be used throughout the paper for all arguments involving generic $A, B$ and $S$. We will use Euler's homogeneous function theorem, which says that if $H(x,y)$ is a homogeneous function of degree $m$, then $mH = x H_x + y H_y$. We will also use the following fact about binary forms.

\begin{prop}\label{Prop:LinearForms}
Let $H(x,y) \in \C[x,y]$ be a homogeneous polynomial in two variables. Then $H(x,y)$ factors as a product of linear forms.
\end{prop}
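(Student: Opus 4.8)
The plan is to reduce the bivariate problem to a univariate one and invoke the fundamental theorem of algebra. The statement is trivial when $H$ is constant (the empty product, up to a scalar), so I would assume $H$ is nonzero of degree $d\ge 1$. The essential idea is that a homogeneous polynomial in $x$ and $y$ is determined by its behavior on any affine chart, and dehomogenizing turns it into an ordinary single-variable polynomial that $\C$ splits completely.

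First I would strip off the factors of $y$. Let $k$ be the largest integer such that $y^k$ divides $H$, and write $H(x,y)=y^k\,\tilde H(x,y)$, where $\tilde H$ is still homogeneous, of degree $d-k$, and is not divisible by $y$. The point of this step is bookkeeping for the degree: since $y\nmid \tilde H$, the coefficient of the pure power $x^{d-k}$ in $\tilde H$ is nonzero. Consequently, the dehomogenization $p(x):=\tilde H(x,1)\in\C[x]$ is a genuine univariate polynomial of degree exactly $d-k$, with no loss of leading term.

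Next I would apply the fundamental theorem of algebra to $p$, writing $p(x)=c\prod_{j=1}^{d-k}(x-\alpha_j)$ for some $c\in\C^{\times}$ and roots $\alpha_j\in\C$. Homogenizing this factorization with respect to $y$ recovers $\tilde H$: because $\tilde H$ is homogeneous of degree $d-k$, one has $\tilde H(x,y)=y^{d-k}\,p(x/y)=c\prod_{j=1}^{d-k}(x-\alpha_j y)$, where each $x-\alpha_j y$ is a linear form. Multiplying back the factor $y^k$ gives
\[
H(x,y)=c\,y^k\prod_{j=1}^{d-k}(x-\alpha_j y),
\]
a product of linear forms, as desired.

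The only step requiring care is the degree bookkeeping in the dehomogenization: if one forgot to divide out $y^k$ first, then $p(x)=H(x,1)$ could have degree strictly less than $d$, and homogenizing its factorization would fail to reproduce $H$. Isolating the power of $y$ up front is precisely what guarantees that the number of linear factors recovered, namely $k+(d-k)=d$, matches the degree of $H$ and that the reconstruction is exact. This is the main (and only genuine) obstacle; everything else is the fundamental theorem of algebra together with the elementary identity $\tilde H(x,y)=y^{\deg\tilde H}\tilde H(x/y,1)$ valid for homogeneous $\tilde H$.
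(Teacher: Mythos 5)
Your proof is correct and is essentially the paper's own argument: dehomogenize to a univariate polynomial, factor it by the fundamental theorem of algebra, and rehomogenize, with a power of $y$ accounting for the degree deficit. The only difference is cosmetic bookkeeping --- you extract the factor $y^k$ at the outset, while the paper sets $h(x) = H(x,1)$ of degree $c$ directly and appends the factor $y^{d-c}$ at the end; these are the same proof with $k = d-c$.
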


\begin{proof}
Suppose that $H(x,y)$ is homogeneous of degree $d$. Let $h(x) := H(x,1)$ have degree $c$.
Since $\C$ is algebraically closed, by the Fundamental Theorem of Algebra, 
$h(x) = a \prod_{i=1}^c (x - r_i)$ for some $a, r_1, \dots, r_c \in \C$.
Then $H(x,y) = a y^{d-c} \prod_{i=1}^c (x - r_i y)$.
\end{proof}

We further note that a generic $h \in \C[x,y]$ factors as a product of \emph{distinct} linear forms. A binary form has a multiple root if and only if its discriminant vanishes, which is a closed condition on the space coefficients \cite[Sec.~0.12]{popovvinberg}.

\begin{lemma}\label{Lem:EmptyVarieties} For generic $A,B$ and $S$, the following projective varieties are empty:
\begin{enumerate}
\item $V(P,P_x),  V(P,T), V(P,P_y)$
\item $V(P_x,P_y)$
\item $V(P_x, T_x), V(P_y,T_y), V(T,T_x), V(T,T_y)$
\end{enumerate}
\end{lemma}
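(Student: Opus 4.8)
The plan is to reduce every item to a coprimality statement about binary forms and then verify it on a single explicit witness. First I would observe that, because $\Sigma = xA+yB$ has entries that are linear in $x$ and $y$, each of $P,P_x,P_y,T,T_x,T_y$ is a binary form, i.e.\ homogeneous in $x$ and $y$ alone (of degrees $n,n-1,n-1,n-1,n-2,n-2$). Hence every variety in the list is a subvariety of $\PP^1$, and by Proposition~\ref{Prop:LinearForms} each form is a product of linear factors. A pair of binary forms has empty common zero locus in $\PP^1$ precisely when they share no common linear factor, equivalently when their resultant does not vanish. Since the coefficients of all six forms are polynomials in the entries of $A,B,S$, each of these resultants is a polynomial in $(A,B,S)$, so its non-vanishing cuts out a Zariski-open set. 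By the genericity framework discussed in the previous section it therefore suffices, for each pair, to exhibit one choice of $(A,B,S)$ making the two forms coprime.

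For the witness I would take $A=I_n$ and $B=\operatorname{diag}(\lambda_1,\dots,\lambda_n)$ with the $\lambda_i$ distinct and nonzero, so that $\Sigma$ is diagonal with entries $\ell_i:=x+\lambda_i y$ and $P=\prod_i \ell_i$ has the $n$ distinct roots $[-\lambda_i:1]$, none equal to $[1:0]$. Here $P_x=\sum_i\prod_{j\ne i}\ell_j$ and $P_y=\sum_i\lambda_i\prod_{j\ne i}\ell_j$; evaluating at a root $\ell_k=0$ gives $P_x=\prod_{j\ne k}(\lambda_j-\lambda_k)\ne 0$ and $P_y=\lambda_k\prod_{j\ne k}(\lambda_j-\lambda_k)\ne 0$, so $P$ has no common root with $P_x$ or $P_y$, settling $V(P,P_x)$ and $V(P,P_y)$. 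For $V(P_x,P_y)$ I would invoke Euler's theorem, $nP=xP_x+yP_y$: any common zero of $P_x$ and $P_y$ is a zero of $P$, but the zeros of $P$ are not zeros of $P_x$, so this variety is empty as well.

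The trace forms are the heart of the argument. Taking $S=\operatorname{diag}(s_1,\dots,s_n)$ makes $\adj\Sigma$ diagonal, so $T=\sum_i s_i\prod_{j\ne i}\ell_j$. The key computation is that the cofactor forms $\prod_{j\ne i}\ell_j$ are linearly independent --- evaluating them at the distinct roots of $P$ produces an invertible diagonal matrix --- and there are $n$ of them in the $n$-dimensional space of degree-$(n-1)$ binary forms. Hence, as $S$ ranges over diagonal matrices, $T$ sweeps out \emph{all} binary forms of degree $n-1$, and (because $\partial/\partial x$ and $\partial/\partial y$ are surjective onto degree-$(n-2)$ forms) $T_x,T_y$ sweep out all forms of degree $n-2$. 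This converts ``generic $S$'' into ``generic binary form'': a generic degree-$(n-1)$ form has distinct roots, does not vanish at $[1:0]$, and is coprime to the fixed form $P$, which yields $V(P,T)=\emptyset$; and generic degree-$(n-2)$ forms $T_x,T_y$ are coprime to the fixed forms $P_x,P_y$, giving $V(P_x,T_x)=V(P_y,T_y)=\emptyset$. For the last two, $V(T,T_x)$ and $V(T,T_y)$, I would apply Euler to $T$, namely $(n-1)T=xT_x+yT_y$: a common zero of $T$ and $T_x$ forces $yT_y=0$, so it is either $[1:0]$ or a common zero of $T_x,T_y$ (a multiple root of $T$); since a generic $T$ has distinct roots and no root at $[1:0]$, both varieties are empty.

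The hardest part will be exactly this control of $T,T_x,T_y$: unlike $P$, they are not governed by the generalized eigenvalues of the pencil $xA+yB$, and the spanning computation for the cofactor forms is what allows the reduction to generic binary forms, after which the standard facts (distinct roots off the discriminant locus, coprimality to a fixed form) close the argument. A secondary point to watch throughout is the behaviour at the ``point at infinity'' $[1:0]$ of $\PP^1$ (the locus $y=0$), whose exclusion rests on $\det A\ne 0$ and on the non-vanishing of the relevant leading coefficients in $x$; this is where the conditions that the $\lambda_i$ be distinct and nonzero, together with $\det A,\det B\ne 0$, enter the genericity argument.
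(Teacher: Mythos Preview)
Your argument is correct and shares the paper's overall framework: reduce each item to coprimality of two binary forms, note this is a Zariski-open condition on $(A,B,S)$ (you via resultants, the paper via the projective elimination theorem), and then produce a witness. You both take the same diagonal choice $A=I_n$, $B=\operatorname{diag}(\lambda_1,\dots,\lambda_n)$ for the $P$-only statements and both dispatch $V(P_x,P_y)$ via Euler's relation.

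Where you diverge is in the $T$-dependent varieties. The paper picks two explicit rank-one sample matrices $S=uu^T$: with $u=(1,\dots,1)$ it exploits the coincidence $T=P_x$ to get $V(P,T)$ for free, and with $u=e_1$ it obtains $T=\prod_{k\ne 1}(x+ky)$ in product form together with the identity $P_x=T+(x+y)T_x$, then argues by tracking linear factors. Your route instead lets $S$ range over diagonal matrices and proves the spanning statement that the cofactor forms $\prod_{j\ne i}\ell_j$ are a basis of the degree-$(n-1)$ binary forms, so that $T$ (and by surjectivity of $\partial_x,\partial_y$ also $T_x,T_y$) can be taken to be an \emph{arbitrary} form of the appropriate degree; the remaining claims then reduce to standard facts about a generic binary form (no shared root with a fixed nonzero form, no repeated root, no root at $[1:0]$). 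Your approach is cleaner and more uniform—the single spanning lemma does all the $S$-work at once—while the paper's explicit witnesses avoid the linear-independence computation but require spotting a special $S$ tailored to each case.
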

\begin{proof}
The emptiness of the varieties in the statement is an open condition in the space of parameters $(A,B,S)$.
For instance, the subset of the parameter space $\mathbb{A}_{(A,B,S)}$ where $V(P,P_x)$ is non-empty is the image of the variety defined by $P$ and $P_x$ in the space $\mathbb{A}_{(A,B,S)}\times \mathbb{P}^1_{[x:y]}$ under the first projection. This is a Zariski-closed subset of the parameter space by the projective elimination theorem \cite[Ch. 8.5]{cox1992}.

To show that the projective varieties in the statement are empty, we show that the polynomials defining them have no common factors. This makes use of Proposition \ref{Prop:LinearForms}, which states that every homogeneous form in two variables factors as a product of linear forms.

First, consider the case where $A$ is the $n \times n$ identity matrix, $B$ is the diagonal matrix with diagonal entries $1, \dots, n$, and $S = uu^T$ where $u$ is the vector of all ones. We have
\[
P = \prod_{k=1}^n (x + ky) \quad \text{and} \quad
 P_x = \sum_{k=1}^n \prod_{j\neq k}(x + jy).
\]
From this we deduce that if $p = x+ky$ is a linear form that divides $P$, then it does not divide $P_x$. This shows that $V(P,P_x)$ is empty.
The variety $V(P,T)$  is empty as well since $P_x = T$ in this case. Similarly, one shows that $V(P,P_y)$ is empty.

Euler's homogeneous function theorem applied to $P$ says that $nP = xP_x + yP_y$. Since $V(P,P_x)$ is generically empty, the same holds for $V(P_x, P_y)$.

To prove the rest of the statements, we switch to an element $(A,B,S)$ that makes the form of $T$ particularly simple.
Let $A$ and $B$ be as before and $u = (1,0,\dotsc,0)$.
This is allowed when combining generic properties as we explained at the end of Section~\ref{generic-properties}.
In this case we have
\[T = \prod_{k\neq 1}(x + ky) \quad \text{and}\quad
P_x = T + (x+y)T_x.
\]
Assume $p$ divides $P_x$ and $T_x$. Then $p$ divides $T$, hence we may assume $p=x + ky$ with $k\neq 1$. However, we have
$p\nmid P_x$ as before. This contradiction shows that $V(P_x,T_x)$ is empty. Similarly, $V(P_y, T_y)$ is empty. This example also has $T$ with no common roots, hence $V(T,T_x)$ and $V(T,T_y)$ are generically empty.
\end{proof}

Now we will show that the projective curves defined by $F$ and $G$ satisfy the hypothesis of B\'ezout's theorem; that is, that they do not share a common component. This justifies our application of B\'ezout's theorem and allows us to count the points in their variety. To prove this, we show that the polynomials $f$ and $g$ in \eqref{Eqn:fg} generically are irreducible and do not share a common factor.

\begin{lemma}\label{Lem:Irreducible}
The polynomials $f$ and $g$ in \eqref{Eqn:fg} are irreducible for generic $A, B$ and $S$. 
\end{lemma}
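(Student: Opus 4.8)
The plan is to exploit the fact that, since $P=\det\Sigma$ and $T=\tr(S\adj\Sigma)$ are homogeneous in $x$ and $y$, the polynomial $f$ is a sum of exactly two homogeneous forms: the degree $2n-1$ form $PP_x$ and the degree $2n-2$ form $PT_x-TP_x$. This two-layer structure severely restricts any factorization. I would argue by contradiction: suppose $f=f_1f_2$ with both factors nonconstant, and decompose each $f_i$ into its homogeneous parts, writing $d_i$ and $b_i$ for the top and bottom degrees occurring in $f_i$. Comparing top and bottom degrees of the product gives $d_1+d_2=2n-1$ and $b_1+b_2=2n-2$, so $(d_1-b_1)+(d_2-b_2)=1$. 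As each summand is a nonnegative integer, exactly one factor is homogeneous; say $f_2$ is homogeneous of degree $d_2$, while $f_1$ consists of its top two graded pieces.

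Reading off the top and bottom homogeneous parts of $f=f_1f_2$ then shows that $f_2$ divides both $PP_x$ and $PT_x-TP_x$; that is, $f_2$ divides their greatest common divisor. The crux is therefore to show that $\gcd(PP_x,\,PT_x-TP_x)=1$ for generic $A,B,S$. By Proposition~\ref{Prop:LinearForms} every binary form splits into linear factors, so it suffices to rule out a common linear factor $\ell$. Since $\ell$ divides $PP_x$, either $\ell\mid P$ or $\ell\mid P_x$. If $\ell\mid P$, then $PT_x-TP_x\equiv -TP_x\pmod\ell$, forcing $\ell\mid T$ or $\ell\mid P_x$, which contradicts the emptiness of $V(P,T)$ or $V(P,P_x)$ from Lemma~\ref{Lem:EmptyVarieties}. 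If instead $\ell\mid P_x$, then $PT_x-TP_x\equiv PT_x\pmod\ell$, forcing $\ell\mid P$ or $\ell\mid T_x$, which contradicts the emptiness of $V(P,P_x)$ or $V(P_x,T_x)$. Hence no such $\ell$ exists and the gcd is $1$, so $f_2$ must be constant, contradicting the assumed nontrivial factorization. The identical argument, with $y$ in place of $x$ and using the emptiness of $V(P,P_y)$, $V(P,T)$, and $V(P_y,T_y)$, handles $g$.

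I expect the main obstacle to be the gcd computation rather than the degree bookkeeping: one must verify that no linear form can simultaneously divide the degree $2n-1$ and degree $2n-2$ parts, and the whole argument hinges on having precisely the right list of generically empty varieties in Lemma~\ref{Lem:EmptyVarieties}. Note that this computation also confirms $PT_x-TP_x\not\equiv 0$, since otherwise the gcd would be the nonconstant form $PP_x$; this rules out the degenerate possibility $f=PP_x$, which would itself be reducible for $n\ge 2$. The degree-splitting step is elementary, but it is the key structural insight that converts the question of irreducibility into a finite check on common linear factors.
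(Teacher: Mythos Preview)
Your proof is correct and follows essentially the same approach as the paper's: both exploit the two-layer homogeneous structure of $f$ to deduce that any nontrivial factor must be homogeneous and hence divide both $PP_x$ and $PT_x-TP_x$, then rule out common linear factors via the same case analysis using Lemma~\ref{Lem:EmptyVarieties}. Your degree-bookkeeping argument is a slightly more explicit version of the paper's one-line claim that one factor must be homogeneous, but the substance is identical.
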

\begin{proof}
We prove the statement for $f$. The proof for $g$ is analogous. Write $f = F_{2n-1} + F_{2n-2}$, where
\[
F_{2n-1} = P P_x\quad\text{and}\quad F_{2n-2} = P T_x - T P_x
\]
and $\deg(F_i)=i$. If $f$ decomposes into a product of two polynomials, then at least one of them is homogeneous and we call it $h$.
Indeed,
otherwise the degrees of $F_{2n-1}$ and $F_{2n-2}$ would be at least two apart, when in fact they differ by one. Since $h$ is homogeneous and divides a nonzero sum of homogeneous polynomials, $h$ divides each of the summands $F_{2n-1}$ and $F_{2n-2}$. Using Proposition~\ref{Prop:LinearForms}, let $h_0$ be a linear factor of $h$. Since $h_0$ divides $F_{2n-1}$ and is irreducible, $h_0$ divides $P$ or $P_x$. In the first case, since $h_0$ divides $F_{2n-2}$, it would have to divide either $T$ or $P_x$. This would imply that one of the projective varieties $V(P,T)$ and $V(P,P_x)$ is nonempty. By Lemma~\ref{Lem:EmptyVarieties} this does not happen generically. In the second case, it would have to divide either $P$ or $T_x$, which for the same reason does not happen generically.
\end{proof}

\begin{lemma}\label{Lem:NotConstantMults}
For generic $A$, $B$ and $S$, the polynomials $f$ and $g$ in \eqref{Eqn:fg} are not constant multiples of one another.
\end{lemma}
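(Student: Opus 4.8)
The plan is to argue by contradiction, reducing the claim to a statement about the top-degree homogeneous parts of $f$ and $g$ that is already ruled out by Lemma~\ref{Lem:EmptyVarieties}. Suppose that $f = \lambda g$ for some $\lambda \in \C$. As in the proof of Lemma~\ref{Lem:Irreducible}, I would write $f = PP_x + (PT_x - TP_x)$ and $g = PP_y + (PT_y - TP_y)$. Since $P$, $T$, $T_x$, $T_y$ are homogeneous of degrees $n$, $n-1$, $n-2$, $n-2$ and $P_x$, $P_y$ are homogeneous of degree $n-1$, the summand $PP_x$ (resp.\ $PP_y$) is the unique homogeneous component of $f$ (resp.\ $g$) of degree $2n-1$, while the remaining summand is homogeneous of degree $2n-2$.

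Comparing homogeneous components of degree $2n-1$ on both sides of $f = \lambda g$ gives $PP_x = \lambda PP_y$. For generic $A,B$ the form $P = \det\Sigma$ is nonzero, so, working in the integral domain $\C[x,y]$, I would cancel $P$ to obtain $P_x = \lambda P_y$.

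Next I would exclude the degenerate value of $\lambda$. For generic $A,B$ the form $P_x$ is nonzero (otherwise $V(P,P_x)$ could not be empty as in Lemma~\ref{Lem:EmptyVarieties}(1)), so $\lambda = 0$ is impossible. With $\lambda \neq 0$, the identity $P_x = \lambda P_y$ says that $P_x$ and $P_y$ are proportional and hence define the same zero locus in $\PP^1$. Because $\deg P_x = n-1 \geq 1$, Proposition~\ref{Prop:LinearForms} provides a linear factor of $P_x$, i.e.\ a common zero of $P_x$ and $P_y$, so $V(P_x,P_y) \neq \emptyset$. This contradicts Lemma~\ref{Lem:EmptyVarieties}(2), completing the argument for generic $A,B,S$.

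I expect no serious obstacle here: the entire content is the comparison of leading forms together with the already-established emptiness of $V(P_x,P_y)$. The only points requiring care are bookkeeping ones---confirming that $PP_x$ and $PP_y$ really are the top homogeneous pieces so that equating leading forms is legitimate, ensuring the generic non-vanishing of $P$ and $P_x$ so that both the cancellation of $P$ and the exclusion of $\lambda = 0$ are valid, and noting that the argument uses $n \geq 2$ so that $P_x$ is a nonconstant form with a genuine zero in $\PP^1$.
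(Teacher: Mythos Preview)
Your argument is correct and follows the same approach as the paper: compare the top-degree homogeneous parts $PP_x$ and $PP_y$, cancel $P$, and invoke the generic emptiness of $V(P_x,P_y)$ from Lemma~\ref{Lem:EmptyVarieties}. You have simply spelled out in more detail the steps (cancelling $P$, excluding $\lambda=0$, noting $n\geq 2$) that the paper's two-sentence proof leaves implicit.
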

\begin{proof}
If $f$ and $g$ are constant multiples of each other, then so are their highest degree terms $PP_x$ and $PP_y$. This does not happen generically since by Lemma~\ref{Lem:EmptyVarieties} the projective variety $V(P_x,P_y)$ is generically empty.
\end{proof}

\color{black}

Furthermore, we can describe exactly which points are removed from the affine variety $V(f,g)$ after we saturate with the determinant. For generic parameters, the only point that is removed after saturation is the origin.

\begin{prop}\label{Prop:SaturateOrigin}
For generic $A$, $B$ and $S$, we have
\[
	V(f,g)\setminus V(\det \Sigma) = V(f,g)\setminus \{(0,0)\}.
\]
\end{prop}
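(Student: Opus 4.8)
The plan is to reduce the claimed set equality to the single statement that $V(f,g)\cap V(P)=\{(0,0)\}$, where $P=\det\Sigma$. Since each of $f$, $g$ and $P$ is a sum of forms of strictly positive degree (the lowest-degree piece of $f$ and $g$ has degree $2n-2\geq 2$, and $\deg P=n\geq 2$), all three vanish at the origin, so $(0,0)$ lies in every set appearing in the statement. Hence the displayed equality holds if and only if the origin is the only common zero of $f$, $g$ and $P$.

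First I would record that $P$, $P_x$, $P_y$, $T$, $T_x$ and $T_y$ are homogeneous forms in $x$ and $y$, of degrees $n$, $n-1$, $n-1$, $n-1$, $n-2$ and $n-2$ respectively. This is because $\det(xA+yB)$ is homogeneous of degree $n$ and the entries of $\adj(xA+yB)$ are homogeneous of degree $n-1$, and differentiation lowers the degree by one. The point of this observation is that a \emph{nonzero} affine point $(x,y)$ at which one of these forms vanishes determines a projective point $[x:y]\in\PP^1$ at which the same form vanishes, which is exactly the setting of Lemma~\ref{Lem:EmptyVarieties}.

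Next I would exploit the shape of $f$ and $g$ at a common zero lying on $V(P)$. Suppose $(x,y)\neq(0,0)$ satisfies $f=g=0$ together with $P=0$. Substituting $P=0$ into the expressions $f=PP_x+PT_x-TP_x$ and $g=PP_y+PT_y-TP_y$ from \eqref{Eqn:fg} collapses them to $f=-TP_x$ and $g=-TP_y$. Thus at the point we have $TP_x=0$ and $TP_y=0$. I would then split into two cases according to whether $T$ vanishes: if $T(x,y)\neq 0$, then $P_x=P_y=0$, so $[x:y]$ lies in the projective variety $V(P_x,P_y)$; if instead $T(x,y)=0$, then together with $P=0$ the point $[x:y]$ lies in $V(P,T)$. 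Both of these projective varieties are empty for generic $A,B,S$ by Lemma~\ref{Lem:EmptyVarieties}, so either case yields a contradiction. Hence no nonzero common zero of $f$, $g$, $P$ exists, giving $V(f,g)\cap V(P)=\{(0,0)\}$ and therefore the claimed equality.

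The only genuine subtlety, and thus the step I would be most careful about, is the first one: making explicit that all the relevant polynomials are binary forms, so that the nonzero affine solutions correspond precisely to the projective points governed by Lemma~\ref{Lem:EmptyVarieties}. Once that homogeneity is in place, the rest is a direct substitution followed by an elementary case split, and no further estimates or computations are needed.
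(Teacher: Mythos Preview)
Your argument is correct and follows essentially the same route as the paper: reduce to showing $V(f,g)\cap V(P)=\{(0,0)\}$, substitute $P=0$ into the expressions for $f$ and $g$ to obtain $TP_x=TP_y=0$, and then split into the cases $T\neq 0$ (forcing $P_x=P_y=0$) and $T=0$, each of which is ruled out by Lemma~\ref{Lem:EmptyVarieties}. You are simply a bit more explicit than the paper about why the origin lies in all the sets and about the homogeneity that lets you pass to projective points.
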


\begin{proof}
Let $q \in V(P, f, g)$. Then $f(q) = - T(q) P_x(q)$ and $g(q) = - T(q) P_y(q)$. In order to have $f(q) = g(q) = 0$, we must either have both $P_x(q) = P_y(q) = 0$ or $T(q) = 0$. By Lemma \ref{Lem:EmptyVarieties}, for generic $A, B$ and $S$, both of these imply $q= (0,0)$.
\end{proof}

\begin{prop}\label{Prop:RealRoadmapProp}
For generic $A$ and $B$, the \emph{ML}-degree of the model $\mathcal{M}_{A,B}$ is
\[
(2n-1)^2 - I_{[0:0:1]}(F,G) - \sum_{q\in V(F,G,z)}I_q(F,G).
\]
\end{prop}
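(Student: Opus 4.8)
The plan is to assemble the result directly from the machinery already set up in the excerpt, with Bézout's Theorem doing the heavy lifting and the preceding lemmas supplying the hypotheses it needs. First I would recall that $F$ and $G$ are the homogenizations of $f$ and $g$, both of degree $2n-1$, as noted just after the statement of Theorem~\ref{Thm:Main}. To invoke Bézout, I must know that $F$ and $G$ share no common component; this is exactly what Lemmas~\ref{Lem:Irreducible} and~\ref{Lem:NotConstantMults} give me for generic $A,B,S$, since $f$ and $g$ are irreducible and not scalar multiples of one another. Combining generic conditions is legitimate by the discussion at the end of Section~\ref{generic-properties}. Bézout's Theorem then yields
\[
(2n-1)^2 = \sum_{q\in V(F,G)} I_q(F,G).
\]

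Next I would partition the intersection points of $V(F,G)$ into three disjoint groups: the distinguished point $[0{:}0{:}1]$, the points at infinity (those in $V(F,G,z)$), and the remaining affine points. This is precisely the decomposition in Equation~\eqref{Eq:IntMultSum}. The core of the argument is then to identify the middle sum with the ML-degree. By Definition~\ref{def:mle}, the ML-degree equals $\deg(J)$, where $J = \mathcal I(f,g):\langle \det\Sigma\rangle^\infty$ is the saturation removing the locus where $\det\Sigma = 0$. Proposition~\ref{Prop:SaturateOrigin} tells me that for generic parameters this saturation removes only the origin from the affine variety $V(f,g)$, which dehomogenizes to removing $[0{:}0{:}1]$ from $V(F,G)$. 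Hence the affine points other than the origin are exactly the points counted by $\deg(J)$, and their total intersection multiplicity is the middle sum of~\eqref{Eq:IntMultSum}.

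Finally I would solve for that middle sum by rearranging~\eqref{Eq:IntMultSum}:
\[
\deg(J) = \sum_{\substack{q\in V(F,G)\\ q\notin\{[0:0:1]\}\cup V(F,G,z)}} I_q(F,G)
= (2n-1)^2 - I_{[0:0:1]}(F,G) - \sum_{q\in V(F,G,z)} I_q(F,G),
\]
which is the claimed formula. I expect the only real subtlety to be the identification of the middle sum of~\eqref{Eq:IntMultSum} with $\deg(J)$: I would need to confirm that the intersection multiplicity $I_{[x:y:1]}(F,G)$ at an affine point agrees with the local multiplicity of that point as a component of the zero-dimensional scheme cut out by $f$ and $g$ (equivalently by $J$ away from the determinant locus), so that summing these local multiplicities indeed recovers $\deg(J)$. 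This is a standard compatibility between intersection multiplicity of plane curves and length of the local ring, but it is the one place where I would be careful to cite the relevant properties of $I_q$ from~\cite[Sec.~3]{fulton1989} rather than treat it as immediate. Everything else is bookkeeping that the earlier results have already justified.
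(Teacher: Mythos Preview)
Your proposal is correct and mirrors the paper's own proof almost exactly: both invoke Lemmas~\ref{Lem:Irreducible} and~\ref{Lem:NotConstantMults} to apply B\'ezout's Theorem, use Proposition~\ref{Prop:SaturateOrigin} to reduce the saturation to removing the origin, and then rearrange the decomposition~\eqref{Eq:IntMultSum}. The one point you flag as a subtlety---that the local intersection multiplicity agrees with the scheme-theoretic length so that the middle sum equals $\deg(J)$---is in fact taken for granted in the paper's proof, so your version is if anything slightly more careful.
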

\begin{proof}
The ML-degree of $\mathcal M_{A,B}$ is defined as the degree of the ideal $J = \langle f,g\rangle : (\det\Sigma)^\infty$. The affine variety $V(J)$ embeds in projective space as
\[
V(F,G)\setminus (V(F,G,z)\cup V(\det\Sigma)).
\]
By B\'ezout's Theorem (Theorem \ref{Thm:Bezout}), Lemmas \ref{Lem:Irreducible} and \ref{Lem:NotConstantMults} imply that the variety $V(F,G)$ is zero-dimensional.
Using Proposition \ref{Prop:SaturateOrigin}
we have
\begin{align*}
\deg (J)
%
%
&=
\sum_{\substack{q\in V(F,G)\\
q\notin \{[0:0:1]\} \cup V(F,G,z)}}I_q(F,G) \\
&= \sum_{q\in V(F,G)}I_q(F,G) - I_{[0:0:1]}(F,G) - \sum_{q\in V(F,G,z)}I_q(F,G).
\end{align*}
Both $F$ and $G$ have degree $2n-1$.
Applying Theorem~\ref{Thm:Bezout} to $F$ and $G$ gives the desired equality.
\end{proof}

\section{Multiplicity at the Origin}\label{Sec:MultiplicityOfTheOrigin}

In this section we compute the intersection multiplicity of the polynomials $f,g$ in \eqref{Eqn:fg} at the origin, denoted by $I_{[0:0:1]}(F,G)$  and also $I_{(0,0)}(f,g)$.

For a polynomial in two variables $h$ there is a notion of \emph{multiplicity} of $h$ at the origin, denoted $m_{(0,0)}(h)$. This is the degree of the lowest-degree summand in the decomposition of $h$ as a sum of homogeneous polynomials (for details, see \cite[Section 3.1]{fulton1989}). Since the polynomials $f,g$ can be written as the sum of a homogeneous degree $2n-2$ form with a homogeneous degree $2n-1$ form, we have $m_{(0,0)}(f) = m_{(0,0)}(g) = 2n-2$. We have the identity
\begin{equation}\label{Eq:FultonIdentity}
	I_{(0,0)}(f,g) = m_{(0,0)}(f) \cdot m_{(0,0)}(g)
\end{equation}
if the lowest-degree homogeneous forms of $f$ and $g$ share no common factors \cite[Section 3.3]{fulton1989}.
The degree $2n-2$ parts of $f$ and $g$ are $Q = PT_x - TP_x$ and $R = PT_y - TP_y$, respectively.

\begin{prop}\label{Prop:QR}
For generic $A$, $B$ and $S$, the polynomials $Q$ and $R$ share no common factor.
\end{prop}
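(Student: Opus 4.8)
The plan is to argue by contradiction: if $Q$ and $R$ had a common factor, I would extract from it a common linear factor and play it off against the emptiness results of Lemma~\ref{Lem:EmptyVarieties}. First I would note that because $\Sigma = xA+yB$ has entries linear in $(x,y)$, the determinant $P$, the trace $T = \tr(S\adj\Sigma)$, and their partial derivatives are all homogeneous forms; consequently $Q = PT_x - TP_x$ and $R = PT_y - TP_y$ are homogeneous of degree $2n-2$. Any factor of a homogeneous form is again homogeneous, so by Proposition~\ref{Prop:LinearForms} a hypothetical common factor of $Q$ and $R$ must contain a common linear factor $p$.

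The crux of the argument is a single algebraic identity coming from Euler's homogeneous function theorem. Applying it to $T$ (degree $n-1$) and to $P$ (degree $n$) gives $xT_x+yT_y = (n-1)T$ and $xP_x+yP_y = nP$, so that
\[
xQ + yR = P(xT_x+yT_y) - T(xP_x + yP_y) = (n-1)PT - nPT = -PT.
\]
Since $p$ divides both $Q$ and $R$, it divides $xQ+yR = -PT$, and because $p$ is irreducible it must divide $P$ or $T$.

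I would then finish with a short case analysis using $Q$ alone. If $p \mid P$, then from $p \mid Q = PT_x - TP_x$ I deduce $p \mid TP_x$, whence $p \mid T$ or $p \mid P_x$; the first makes $V(P,T)$ nonempty and the second makes $V(P,P_x)$ nonempty. If instead $p \mid T$, then from $p \mid Q$ I deduce $p \mid PT_x$, whence $p \mid P$ (again forcing $V(P,T)$ nonempty) or $p \mid T_x$ (forcing $V(T,T_x)$ nonempty). Every branch contradicts Lemma~\ref{Lem:EmptyVarieties} for generic $A,B,S$, so generically $Q$ and $R$ share no common factor.

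The main obstacle is really just spotting the Euler identity $xQ + yR = -PT$: once it is in hand, the proposition reduces immediately to the generic emptiness statements already established, and no delicate estimates or computations remain. The only point requiring a little care is confirming that a common factor of two homogeneous forms may be taken to be linear, which follows from the fact that factors of homogeneous polynomials are homogeneous together with Proposition~\ref{Prop:LinearForms}.
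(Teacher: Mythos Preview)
Your proof is correct and follows essentially the same route as the paper's: both establish the Euler identity $xQ+yR=-PT$, deduce that an irreducible (equivalently, linear) common factor $p$ divides $P$ or $T$, and then use $Q=PT_x-TP_x$ to reach one of the four contradictions with Lemma~\ref{Lem:EmptyVarieties}. Your write-up is arguably tidier, since you apply Euler with the correct degrees $n-1$ and $n$ (the paper writes $2n-2$ and $2n-1$, though the difference is still $-1$).
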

\begin{proof}
By the definition of $Q$ and $R$ and two applications of Euler's homogeneous function theorem we have
\begin{align*}
xQ + yR &= (xT_x+yT_y)P - (xP_x+yP_y)T\\
&= (2n-2)TP - (2n-1)PT \\
&= -PT.
\end{align*}
Assume that $Q$ and $R$ share a common factor $p$, which we may assume is irreducible.
Then $p$ divides $PT$. So $p$ divides $P$ or $p$ divides $T$, but not both by Lemma~\ref{Lem:EmptyVarieties}. If $p$ divides $P$, then since $Q =  PT_x - TP_x$ and $p$ is a factor of $Q$, $p$ also divides $TP_x$. Similarly if $p$ divides $T$, then $p$ also divides $PT_x$. But then either $P$ and $TP_x$ share a common factor, or $T$ and $PT_x$ do. Each of the resulting four further cases does not occur generically by Lemma~\ref{Lem:EmptyVarieties}.
\end{proof}
\color {black}

\begin{cor}\label{Thm:OriginMult}
For generic $A, B$ and $S$, the intersection multiplicity of $f$ and $g$ at the origin is $(2n-2)^2$.
\end{cor}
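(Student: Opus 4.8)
The plan is to derive this statement as an immediate consequence of Fulton's intersection multiplicity formula \eqref{Eq:FultonIdentity} together with Proposition~\ref{Prop:QR}. That formula gives $I_{(0,0)}(f,g) = m_{(0,0)}(f)\cdot m_{(0,0)}(g)$ precisely when the lowest-degree homogeneous parts of $f$ and $g$ share no common factor, so the whole argument reduces to verifying its two ingredients: the values of the multiplicities at the origin, and the coprimality hypothesis.

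First I would record the multiplicities. Each of $f$ and $g$ decomposes as a sum of a homogeneous degree $2n-1$ form and a homogeneous degree $2n-2$ form, so---provided the lower-degree piece is nonzero, which holds generically---the lowest-degree summand has degree $2n-2$. Hence $m_{(0,0)}(f) = m_{(0,0)}(g) = 2n-2$. Next I would identify the lowest-degree homogeneous forms explicitly as $Q = PT_x - TP_x$ and $R = PT_y - TP_y$, which are exactly the degree $2n-2$ parts singled out above.

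The key hypothesis of \eqref{Eq:FultonIdentity} is that these lowest-degree forms $Q$ and $R$ have no common factor. But this is precisely the content of Proposition~\ref{Prop:QR}, which has already been established for generic $A$, $B$ and $S$. Substituting into the formula then yields $I_{(0,0)}(f,g) = (2n-2)(2n-2) = (2n-2)^2$, as claimed.

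In this sense there is no real obstacle remaining in the corollary itself: all of the substantive work has been offloaded to Proposition~\ref{Prop:QR}, whose argument uses Euler's homogeneous function theorem to show $xQ + yR = -PT$ and then rules out common factors via the generic emptiness results of Lemma~\ref{Lem:EmptyVarieties}. The only points requiring care here are confirming that the coprimality hypothesis of Fulton's formula is met and that the lower-degree parts are genuinely nonzero, both of which follow from the earlier lemmas. Thus the corollary is a short bookkeeping step assembling the pieces already in hand.
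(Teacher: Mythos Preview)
Your proposal is correct and mirrors the paper's own argument exactly: invoke Proposition~\ref{Prop:QR} to verify the coprimality hypothesis of~\eqref{Eq:FultonIdentity}, then read off $I_{(0,0)}(f,g) = (2n-2)^2$ from the already-computed multiplicities. The paper's proof is a one-line citation of these two ingredients, and your write-up simply unpacks the same reasoning in more detail.
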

\begin{proof}
By Proposition~\ref{Prop:QR}, this follows from~\eqref{Eq:FultonIdentity}.
\end{proof}

\section{Multiplicity at Infinity}\label{Sec:MultiplicityAtInfinity}

In this section we compute the intersection multiplicity at a point at infinity for the curves $V(f)$ and $V(g)$ defined by the polynomials in \eqref{Eqn:fg} for generic $A,B$ and $S$. To do this we use the connection between intersection multiplicity of curves and their series expansions about an intersection point.

Consider an irreducible polynomial $h$ in two variables such that $h(0,0)=0$ and $h_y(0,0)\neq 0$. By \cite[Sec. 7.11, Cor. 2]{fischer2001}, there exists an infinite series $\alpha=\sum_{\ind=1}^\infty a_\ind t^\ind$ and an open neighborhood $U\subset \C$ containing $t=0$ such that $h(t,\alpha(t))=0$ for all $t\in U$. The series $\alpha$ is called the \emph{series expansion} of $h$ at the origin.
The \textit{valuation} of a series is the number $M$ such that $a_M\neq 0$ and $a_m=0$ for all $m<M$.

\begin{prop} \label{Prop:ContactOrderMult}
Let $\fc$ and $\gc$ be irreducible polynomials in two variables such that $\fc$ and $\gc$ vanish at $(0,0)$ and $\fc_y$ and $\gc_y$ do not. Let $\alpha$ and $\beta$ be infinite series expansions of $\fc$ resp.\ $\gc$ at $(0,0)$. The intersection multiplicity $I_{(0,0)}(\fc,\gc)$ is the valuation of the series $\alpha-\beta$.
\end{prop}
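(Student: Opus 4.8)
The plan is to exploit the local parametrization of each curve given by its series expansion, and to reduce the computation of $I_{(0,0)}(\fc,\gc)$ to a single valuation via the substitution property of intersection multiplicity. Since $\fc_y(0,0)\neq 0$, the series $\alpha(t)$ satisfies $\fc(t,\alpha(t))=0$ identically near $t=0$, so the map $t\mapsto(t,\alpha(t))$ is a local branch of $V(\fc)$; because $\fc$ is irreducible and smooth at the origin (the nonvanishing of $\fc_y$ forces $(0,0)$ to be a smooth point, so the curve is locally a single branch), this parametrization captures all of $V(\fc)$ near the origin. The same holds for $\gc$ with $\beta(t)$. The first step is therefore to record that $x=t$ serves as a local coordinate on each branch, so that the intersection multiplicity can be read off from how $\gc$ restricts to the branch of $\fc$.

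The key algebraic input is the standard fact (Fulton, Sec.~3, properties of intersection multiplicity) that $I_{(0,0)}(\fc,\gc)$ equals the order of vanishing at $t=0$ of the composite $\gc(t,\alpha(t))$, i.e.\ the valuation of $\gc(t,\alpha(t))$ regarded as a power series in $t$. I would justify this by appealing to the parametrization property: since $t\mapsto(t,\alpha(t))$ is a smooth branch of $\fc$ with local parameter $t$, the intersection multiplicity of $\fc$ and $\gc$ is $\operatorname{val}_t\,\gc(t,\alpha(t))$. The second step is then to evaluate this valuation. Because $\beta$ is a branch of $\gc$, we have $\gc(t,\beta(t))=0$ identically; writing $\gc(t,y)$ as a function vanishing along $y=\beta(t)$ and using $\gc_y(0,0)\neq 0$, the factor $(y-\beta(t))$ divides $\gc(t,y)$ to first order with a unit coefficient at the origin. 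Substituting $y=\alpha(t)$ gives
\[
\operatorname{val}_t\,\gc(t,\alpha(t)) = \operatorname{val}_t\bigl(\alpha(t)-\beta(t)\bigr),
\]
since the remaining factor is a unit (nonzero at $t=0$), which does not affect the valuation. Chaining the two steps yields $I_{(0,0)}(\fc,\gc)=\operatorname{val}(\alpha-\beta)$, as claimed.

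The main obstacle I anticipate is the clean justification of the factorization $\gc(t,y)=(y-\beta(t))\cdot u(t,y)$ with $u$ a unit at the origin, and the verification that substituting $\alpha$ for $y$ leaves the unit nonvanishing. This requires knowing that $\beta(t)$ is the unique root of $\gc(t,y)=0$ near $y=0$ for each small $t$, which follows from $\gc_y(0,0)\neq 0$ via the implicit function theorem (or its formal/analytic analogue used to produce the series in the first place), and that $u(0,0)=\gc_y(0,0)\neq 0$, so $u(t,\alpha(t))$ is a unit. A secondary subtlety is ensuring that the intersection-multiplicity-as-order-of-vanishing formula applies at the level of formal power series rather than only convergent ones; this is handled by the cited results of Fischer and Fulton, which guarantee both the existence of the series and the identification of intersection multiplicity with order of contact. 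Once these local facts are in place, the valuation identity is immediate, so the heart of the argument is entirely local analytic geometry at a smooth point of each curve.
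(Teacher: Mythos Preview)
Your proposal is correct, and the first step---reducing $I_{(0,0)}(\fc,\gc)$ to the $t$-valuation of $\gc(t,\alpha(t))$ via the local branch parametrization---matches the paper exactly (the paper cites Fischer, Sec.~8.7, for this; your reference to Fulton is less direct, so you may want to tighten the citation).

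The second step, however, genuinely diverges from the paper. You argue via a Weierstrass-type factorization $\gc(t,y)=(y-\beta(t))\,u(t,y)$ with $u$ a unit at the origin, then substitute $y=\alpha(t)$ and observe that $u(0,\alpha(0))=u(0,0)=\gc_y(0,0)\neq 0$, so the unit does not affect the valuation. The paper instead proceeds by bare hands: it expands $\fc(t,s(t))$ for an arbitrary series $s$, isolates the coefficient of $t^m$ as $c_{0,1}s_m+p(s_1,\ldots,s_{m-1})$ with $c_{0,1}=\fc_y(0,0)\neq 0$, and then argues inductively that $a_m=b_m$ for $m<M$ and $a_M\neq b_M$ if and only if the coefficients of $\fc(t,\beta(t))$ vanish below $M$ and not at $M$. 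Your route is cleaner and more conceptual, but it imports the Weierstrass division/preparation theorem (or the formal implicit function theorem) as a black box; the paper's route is more elementary and self-contained, requiring only manipulation of power-series coefficients, at the cost of being longer and somewhat fiddly. Both are complete once the cited facts are in place; the obstacle you flag (that $u$ is a unit after substituting $\alpha$) is real but, as you note, is resolved by $u(0,0)=\gc_y(0,0)\neq 0$.
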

\begin{proof}
By \cite[Sec. 8.7]{fischer2001}, the intersection multiplicity of $\fc$ and $\gc$ at $(0,0)$ is the valuation of the infinite series $\fc(t,\beta(t))$. We prove that this is the same as the valuation of $\alpha-\beta$. First, let
$s(t) = \sum_{\ind=1}^\infty s_\ind t^\ind$ be any infinite series and write $\fc = \sum_{i,j} c_{i,j} x^i y^j$, where the sum ranges over the pairs $(i,j)$ with $0<i+j\leq\deg(\fc)$. We have
\begin{align*}
	\fc(t,s(t)) = \sum_{i,j} c_{i,j} t^i \left(\sum_{\ind=1}^\infty s_\ind t^\ind\right)^j
	&= \sum_{i,j} c_{i,j} t^i \left(\sum_{\nu = 0}^\infty\left(\sum_{|a| = \nu} s_{a_1}\dotsm s_{a_j}\right) t^{\nu}\right)\\
	&= \sum_{i,j}\sum_{\nu = 0}^\infty\sum_{|a| = \nu} c_{i,j} s_{a_1}\dotsm s_{a_j} t^{\nu+i},
\end{align*}
The coefficient $r_\ind$ of $t^\ind$ in this infinite series is a finite sum of products of the form $c_{i,j} s_{a_1}\dotsm s_{a_j}$ with $a_j \leq \ind$ and $|a| + i = \ind$. The term $s_\ind$ only appears in $r_\ind$ when $j=1$ and $i=0$. Hence, we have $r_\ind = c_{0,1} s_\ind + p(s_1,\dotsc, s_{\ind-1})$ for some polynomial $p$, where $c_{0,1}\neq 0$ since $\fc_y(0,0)\neq 0$. For example, the coefficient $r_0$ is zero since $\fc,\gc$ vanishing at the origin implies that $c_{0,0}$ and $s_0$ are zero, and the coefficient of the first non-zero term is given by $r_1=c_{0,1}s_1+c_{1,0}$.

Write $\alpha(t)=\sum_{\ind=1}^\infty a_\ind t^\ind$ and $\beta(t) = \sum_{\ind=1}^\infty b_\ind t^\ind$. Suppose that the valuation of the series $\alpha - \beta$ is $\kval$. Then $a_\kval\neq b_\kval$ and $a_\ind=b_\ind$ for all $\ind <\kval$. We show that this is equivalent to $\fc(t,\beta (t)) = \sum_{\ind=1}^{\infty} r_{\ind} t^{\ind}$ having valuation $\kval$. Suppose that $\kval=1$; then $a_1\neq b_1$. Since $\fc(t,\alpha(t))$ is identically zero in a neighborhood of $t=0$, we have $r_\ind(a_1,\hdots,a_\ind)=0$ for all $\ind$. In particular
$r_1(a_1)=c_{0,1}a_1+c_{ 1, 0}=0$.
Since $a_1\neq b_1$ this implies that
$r_1(b_1)=c_{0,1}b_1+c_{ 1, 0}\neq 0$
and $\fc(t,\beta(t))$ has valuation one. Similarly if $\fc(t,\beta(t))$ has valuation one, then $r_1(a_1)\neq r_1(b_1)$ implying $a_1\neq b_1$. Thus $\alpha-\beta$ has valuation one if and only if $\fc(t,\beta(t))$ has valuation one.

Now suppose $\kval >1$. By the form of $r_\ind$ it now follows from an inductive argument on $\ind$ that $a_\ind$ and $b_\ind$ agree up to $\ind=\kval$ and differ at $\ind= \kval+1$ if and only if $r_\ind(a_1,\dotsc, a_\ind)$ and $r_{\ind}(b_1,\dotsc, b_\ind)$ agree up to $\ind=\kval$ and differ at $\ind = \kval+1$. Since $r_\ind(a_1,\dotsc,a_\ind) = 0 $ for all $\ind$, the latter is equivalent to $\fc(t,\beta (t))$ having valuation $\kval$.
\end{proof}
\begin{remark}\label{Rem:Shift}
In the context of Proposition~\ref{Prop:ContactOrderMult}, consider instead polynomials $\fc$ and $\gc$ defining the curves $\X$ resp.\ $\Y$ such that $\X$ and $\Y$ meet at a non-singular point $q$. Also, let $v$ be a vector such that the directional derivatives $\fc_v$ and $\gc_v$ do not vanish at $q$. Choose an affine-linear transformation $\varphi\colon \mathbb C^2\to \mathbb C^2$ sending $(0,0)$ to $q$ and $(0,1)$ to $v$. Then
$I_q(\fc,\gc) = I_{(0,0)}(\fc\circ \varphi, \gc\circ \varphi)$ and the polynomials $\fc\circ \varphi, \gc\circ \varphi$ satisfy the hypotheses of Proposition~\ref{Prop:ContactOrderMult}. Thus we can compute the intersection multiplicity at any non-singular intersection point of $\fc,\gc$ using Proposition~\ref{Prop:ContactOrderMult}.
\end{remark}

\begin{remark}
When the series $\alpha - \beta$ has valuation $\kval$, one says that $\fc$ and $\gc$ have \emph{contact order} or \emph{order of tangency} $\kval-1$ at $q$. Therefore the contact order of two curves at an intersection point is always one less than the intersection multiplicity. For more on contact order of algebraic curves see \cite[Chapter 5]{rutter2000}.
\end{remark}

\begin{remark}
The fact that the curves $\X$ and $\Y$ have intersection multiplicity one at $q$ if and only if the gradients of $\fc$ and $\gc$ at $q$ are linearly independent (see e.g. \cite[Sec 3.3]{fulton1989}) arises as a special case of Proposition~\ref{Prop:ContactOrderMult} once one computes the first terms of the series $\alpha$ and $\beta$.
\end{remark}

Returning to the expressions in \eqref{Eqn:fg}, recall that $F$ and $G$ denote the homogenizations of $f$ and $g$ with respect to the new variable $z$. The intersection points of $V(f)$ and $V(g)$ at infinity are exactly the variety $V(F,G,z)$. 

\begin{lemma}\label{Lem:ExactlyNPts}
For generic $A$, $B$ and $S$, the projective variety $V(F,G,z)$ consists of $n$ points of the form $[q_1:q_2:0]$ such that $P(q_1,q_2)=0$.
\end{lemma}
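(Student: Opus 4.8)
The plan is to use the fact that restricting to the line at infinity $z=0$ isolates the top-degree homogeneous parts of $f$ and $g$. Writing $f = PP_x + (PT_x - TP_x)$, the two summands have degrees $2n-1$ and $2n-2$, so the homogenization is $F = PP_x + z\,(PT_x - TP_x)$ and hence $F(x,y,0) = P(x,y)\,P_x(x,y)$. The identical computation gives $G(x,y,0) = P(x,y)\,P_y(x,y)$. Thus a point $[q_1:q_2:0]$ belongs to $V(F,G,z)$ if and only if $[q_1:q_2]\in\mathbb P^1$ satisfies $PP_x = 0$ and $PP_y = 0$. First I would record this reduction explicitly.

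Next I would determine the common zeros of $PP_x$ and $PP_y$ on the line at infinity. Such a point satisfies ($P=0$ or $P_x=0$) and simultaneously ($P=0$ or $P_y=0$); distributing these disjunctions, the common zero locus is $V(P)\cup V(P_x,P_y)$. By Lemma~\ref{Lem:EmptyVarieties}(2), the variety $V(P_x,P_y)$ is empty for generic $A$ and $B$, so the common zero locus is exactly $V(P)$ on the line at infinity. This already proves that every point of $V(F,G,z)$ has the form $[q_1:q_2:0]$ with $P(q_1,q_2)=0$, which is half of the claim.

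It remains to count the points of $V(P)$, and this is the only place requiring any care. By Proposition~\ref{Prop:LinearForms}, $P$ is a binary form of degree $n$ that splits into $n$ linear factors, so it defines $n$ points counted with multiplicity; the count equals $n$ exactly when these factors are pairwise non-proportional, i.e.\ when $P$ has no repeated root. I would derive this from Lemma~\ref{Lem:EmptyVarieties}(1): if $\ell$ is a linear form with $\ell^2 \mid P$, then $\ell$ divides both $P_x$ and $P_y$, so the corresponding point lies in $V(P,P_x)$, which is generically empty. Hence for generic $A,B$ the form $P$ has $n$ distinct roots, and $V(F,G,z)$ consists of exactly these $n$ points. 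The argument is short because Lemma~\ref{Lem:EmptyVarieties} does the real work; I expect no serious obstacle, only the need to handle correctly the case analysis coming from distributing the two factored equations and the observation that a repeated factor of $P$ is detected by $V(P,P_x)$.
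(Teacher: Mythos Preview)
Your proposal is correct and follows essentially the same approach as the paper: both identify $F|_{z=0}=PP_x$ and $G|_{z=0}=PP_y$, use Lemma~\ref{Lem:EmptyVarieties} to rule out $V(P_x,P_y)$, and then argue that $P$ has $n$ distinct linear factors because a repeated factor would land in $V(P,P_x)$. The only cosmetic difference is that you phrase the reduction via distributing disjunctions to obtain $V(P)\cup V(P_x,P_y)$, whereas the paper states the same conclusion more directly.
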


\begin{proof}
Let $q=[q_1:q_2:0]$ be a projective point of $V(F,G)$. We have
\begin{align*}
F&=PP_x+z(PT_x-TP_x)\\
G&=PP_y+z(PT_y-TP_y),
\end{align*}
and hence $V(F,G,z)$ consists of points $q$ where $[q_1{\,:\,}q_2]\in V(PP_x,PP_y)$. Clearly if $P(q_1,q_2)=0$, then $q\in V(F,G,z)$. These are the only such points since, by Lemma~\ref{Lem:EmptyVarieties}, for generic $A,B$ and $S$ the variety $V(P_x,P_y)$ is empty. By Proposition~\ref{Prop:LinearForms} $P(x,y)$ factors in $n$ linear forms. These forms are distinct, since a repeated factor would divide both $P$ and $P_x$, while $V(P,P_x)$ is empty by Lemma~\ref{Lem:EmptyVarieties}. Thus there are $n$ distinct points in $V(F,G,z)$.
\end{proof}

\begin{lemma}\label{Lem:EmptyVarietyProjPoints}
For generic $A,B$ and $S$, the projective variety $V(P, P_yT_x-P_xT_y)$ is empty.
\end{lemma}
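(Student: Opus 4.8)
The plan is to avoid analyzing $V(P,\,P_yT_x-P_xT_y)$ directly and instead reduce its emptiness to the emptiness of $V(P,T)$ and $V(P,P_x)$, both of which already hold generically by Lemma~\ref{Lem:EmptyVarieties}. The bridge is a polynomial identity obtained from Euler's homogeneous function theorem, in the same spirit as the computation $xQ+yR=-PT$ used in the proof of Proposition~\ref{Prop:QR}.

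First I would record that $D:=P_yT_x-P_xT_y$ is a binary form of degree $2n-3$ (since $P_x,P_y$ have degree $n-1$ and $T_x,T_y$ have degree $n-2$), so that $V(P,D)$ is a well-defined subvariety of $\mathbb{P}^1_{[x:y]}$. Next I would apply Euler's theorem to $P$ (degree $n$) and to $T$ (degree $n-1$) to get $xP_x+yP_y=nP$ and $xT_x+yT_y=(n-1)T$. Multiplying the Euler relation for $T$ by $P_x$ and the Euler relation for $P$ by $T_x$ and subtracting the former from the latter, the common $xP_xT_x$ terms cancel and one is left with the identity
\[
y\,(P_yT_x-P_xT_y)=nPT_x-(n-1)P_xT,
\]
which holds identically in $x,y$ and for all $A,B,S$, being a formal consequence of Euler's theorem. (Multiplying instead by $P_y$ and $T_y$ yields the companion identity $x\,(P_yT_x-P_xT_y)=(n-1)P_yT-nPT_y$, which I do not expect to need.)

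With this in hand, suppose toward a contradiction that some $q=[x_0:y_0]$ lies in $V(P,D)$, so that $P(q)=0$ and $(P_yT_x-P_xT_y)(q)=0$. Substituting into the displayed identity and using $P(q)=0$ gives $(n-1)P_x(q)T(q)=0$, and since $n\ge 2$ the factor $n-1$ is nonzero, forcing $P_x(q)T(q)=0$. Hence either $T(q)=0$, so that $q\in V(P,T)$, or $P_x(q)=0$, so that $q\in V(P,P_x)$. Both of these varieties are empty for generic $A,B,S$ by Lemma~\ref{Lem:EmptyVarieties}(1), a contradiction; therefore $V(P,D)$ is empty for generic parameters.

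I expect the only genuine content to be spotting the correct Euler combination that isolates $P_yT_x-P_xT_y$ against the already-controlled quantities $PT_x$ and $P_xT$; once that identity is written down, the rest is a one-line substitution. The points to be careful about are the standing hypothesis $n\ge 2$ (so that $n-1\neq 0$ and $D$ is a genuine form rather than the zero polynomial) and confirming that $D$ is homogeneous, so that $V(P,D)$ is correctly interpreted as a projective variety in $\mathbb{P}^1_{[x:y]}$ to which Lemma~\ref{Lem:EmptyVarieties} applies.
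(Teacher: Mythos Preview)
Your proof is correct and essentially identical to the paper's. Both derive the same identity $y(P_yT_x-P_xT_y)=nPT_x-(n-1)P_xT$ via Euler's theorem and then invoke Lemma~\ref{Lem:EmptyVarieties}; the only cosmetic difference is that the paper phrases the conclusion in terms of common irreducible factors of $P$ and $H$ rather than common points of $V(P,D)$, which for binary forms is the same thing.
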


\begin{proof}
Let $H = P_yT_x - P_xT_y.$ By applying Euler's homogeneous function theorem twice in the following chain of equalities, we have
\[
nT_x P - yH = T_x(nP-yP_y)+yP_xT_y = P_x (yT_y + xT_x) = (n-1)P_x T.
\]
If $P$ and $H$
have an irreducible common factor $p$, then $p \mid P_x T$.
This does not happen generically by Lemma~\ref{Lem:EmptyVarieties}.
\end{proof}

\begin{lemma}\label{Lem:MultTwo}
For generic $A,B$ and $S$, if $q\in V(F,G,z)$ then $I_q(F,G)=2$.
\end{lemma}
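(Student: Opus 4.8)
The plan is to compute $I_q(F,G)$ directly from the formal properties of intersection multiplicity in \cite[Sec.~3]{fulton1989}, exploiting an algebraic identity that ties $F$ and $G$ to the form $H = P_yT_x - P_xT_y$ introduced in Lemma~\ref{Lem:EmptyVarietyProjPoints}. By Lemma~\ref{Lem:ExactlyNPts} every $q\in V(F,G,z)$ has the shape $[q_1:q_2:0]$ with $P(q)=0$, and $q$ is a simple zero of the binary form $P$. First I would record the generic nonvanishing facts needed at such a $q$: since the projective varieties $V(P,P_x)$, $V(P,T)$ (Lemma~\ref{Lem:EmptyVarieties}) and $V(P,H)$ (Lemma~\ref{Lem:EmptyVarietyProjPoints}) are all empty while $P(q)=0$, we get $P_x(q)\neq 0$, $T(q)\neq 0$ and $H(q)\neq 0$.

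The key step is the polynomial identity
\[
P_y\,F - P_x\,G = z\,P\,H,
\]
which one checks by substituting $F = PP_x + z(PT_x - TP_x)$ and $G = PP_y + z(PT_y - TP_y)$, cancelling the degree-$(2n-1)$ term $PP_xP_y$, and collapsing the remaining $z$-term to $zP(P_yT_x - P_xT_y)=zPH$. Because $P_x(q)\neq 0$, the curve $V(P_x)$ misses $q$, so $I_q(F,P_x)=0$ and additivity gives $I_q(F,G)=I_q(F,P_xG)$. Rewriting $P_xG = P_yF - zPH$ and discarding the multiple $P_yF$ of $F$ via $I_q(F,\,W+AF)=I_q(F,W)$, then using additivity over products, yields
\[
I_q(F,G) = I_q(F,\,zPH) = I_q(F,z) + I_q(F,P) + I_q(F,H).
\]
All three terms are finite: $F$ is irreducible of degree $2n-1$ (Lemma~\ref{Lem:Irreducible}), whereas $z$, $P$ and $H$ have strictly smaller degree, so $F$ shares no component with any of them.

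It remains to evaluate the three summands. Since $H(q)\neq 0$, the curve $V(H)$ avoids $q$ and $I_q(F,H)=0$. For $I_q(F,z)$, restricting to the line at infinity gives $F(x,y,0)=PP_x$, which has a simple zero at $q$ because $q$ is a simple root of $P$ and $P_x(q)\neq 0$; hence $I_q(F,z)=1$. For $I_q(F,P)$ I would reduce $F$ modulo $P$: writing $F = (P_x + zT_x)P - zTP_x$ shows $F\equiv -zTP_x \pmod{P}$, so $I_q(F,P) = I_q(P,z) + I_q(P,T) + I_q(P,P_x)$, where $I_q(P,T)=I_q(P,P_x)=0$ since $T(q),P_x(q)\neq 0$, and $I_q(P,z)=1$ because the unique linear factor of $P$ through $q$ and the line $z=0$ are distinct lines meeting transversally at $q$. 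Thus $I_q(F,P)=1$ and altogether $I_q(F,G)=0+1+1=2$, uniformly for every $q\in V(F,G,z)$.

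I expect the main obstacle to be bookkeeping rather than a single hard idea: one must derive the identity $P_yF-P_xG=zPH$ correctly, track which factors act as local units (so that multiplying or dividing by them leaves $I_q$ unchanged), and justify the two nested reductions—first modulo $F$, then modulo $P$—each time confirming that no common component arises, so that the additivity and $+AF$ properties of \cite[Sec.~3]{fulton1989} genuinely apply. As a sanity check one can instead route through Proposition~\ref{Prop:ContactOrderMult} and Remark~\ref{Rem:Shift}: the gradients $\nabla F(q)=P_x(q)\,(P_x,P_y,-T)$ and $\nabla G(q)=P_y(q)\,(P_x,P_y,-T)$, evaluated at $q$, are parallel and nonzero (using $P_x(q),P_y(q)\neq0$ from Lemma~\ref{Lem:EmptyVarieties}), so the curves are nonsingular and tangent at $q$, forcing $I_q\geq 2$; the condition $H(q)\neq 0$ is then exactly what obstructs second-order contact and pins the value at $2$.
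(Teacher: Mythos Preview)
Your proof is correct and takes a genuinely different route from the paper's. The paper works analytically via Proposition~\ref{Prop:ContactOrderMult}: after dehomogenizing at $x=1$ and translating $q$ to the origin, it computes the first two coefficients $a_1,b_1$ and $a_2,b_2$ of the series expansions of $\tilde F$ and $\tilde G$, verifies $a_1=b_1$ by hand from the first derivatives, and then uses \emph{Maple} to simplify the second-order expression to $(a_2-b_2)(q)=(T^4P_x^2P_y^4(P_yT_x-P_xT_y))(1,q_2)$, which is nonzero by Lemmas~\ref{Lem:EmptyVarieties} and~\ref{Lem:EmptyVarietyProjPoints}. Your argument replaces this entire computation by the single homogeneous identity $P_yF-P_xG=zPH$ and the axiomatic properties of $I_q$ from \cite[Sec.~3.3]{fulton1989}, reducing everything to two transversal line intersections $I_q(F,z)=I_q(P,z)=1$. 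This is shorter, avoids computer algebra, and makes transparent \emph{why} the nonvanishing of $H=P_yT_x-P_xT_y$ at $q$ (Lemma~\ref{Lem:EmptyVarietyProjPoints}) is precisely the obstruction to $I_q(F,G)\geq 3$; in the paper's approach that same form $H$ only emerges at the end of the Maple simplification. The paper's method, on the other hand, showcases Proposition~\ref{Prop:ContactOrderMult} and would extend more readily to situations where no clean syzygy between $F$ and $G$ is available.
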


\begin{proof}
By Lemma \ref{Lem:ExactlyNPts}, such points are of the form $q=[q_1:q_2:0]$ where $P(q_1,q_2)=0$. Fix such a point $q$ and assume for simplicity that $q_1\neq 0$. This is not a restriction since the conditions $q_1=0$ and $P(q)=0$ imply $\det(B)=0$ which is a closed condition on the parameter space. Thus we can assume $q$ is of the form $[1:q_2:0]$.

Since intersection multiplicity at a point is a local quantity, we may dehomogenize with respect to $x$ and consider the intersection multiplicity of the affine curves $V(F(1,y,z))$ and $V(G(1,y,z))$ at $q$. We can compute the partial derivatives with respect to $y$ and $z$:
\begin{align}\label{Eq:FirstDerivative}
&F_y(x,y,z)=P_yP_x+PP_{xy}+z\left(\frac{d}{dy}(PT_x-TP_x)\right), &F_z(x,y,z)=PT_x-TP_x,\nonumber\\
&G_y(x,y,z)=P_y^2+PP_{yy}+z\left(\frac{d}{dy}(PT_y-TP_y)\right), &G_z(x,y,z)=PT_y-TP_y.
\end{align}
Consider the translated polynomials obtained by translating $q$ to $[1:0:0]$ given by $\tilde{F}=F(1,y+q_2,z)$ and $\tilde{G}=G(1,y+q_2,z)$. Then $\tilde{F}_z(1:0:0), \tilde{G}_z(1:0:0) \neq 0$ if and only if $F_z(q),G_z(q)\neq 0$, and from \eqref{Eq:FirstDerivative}, we have that 
$$
F_z(q)=(-TP_x)(1,q_2)\quad  \text{and}\quad G_z(q)=(-TP_y)(1,q_2).
$$
Since $P(1,q_2) = 0$, Lemma \ref{Lem:EmptyVarieties} implies that $F_z(q),G_z(q)\neq 0$. Thus there exist series expansions
$\alpha  = \sum_{\ind=1}^\infty a_\ind t^\ind$ and
$\beta= \sum_{\ind=1}^\infty b_\ind t^\ind$
such that, for all $t$ in a neighborhood of $t=0$,
$$
\tilde{F}\left(1,t,\sum_{\ind=1}^\infty a_\ind t^\ind\right)=0\quad \text{and} \quad \tilde{G}\left(1,t,\sum_{\ind=1}^\infty b_\ind t^\ind\right)=0,
$$
and hence
\begin{equation}\label{Eq:SeriesExp}
F\left(1,t+q_2,\sum_{\ind=1}^\infty a_\ind t^\ind \right)=0\quad \text{and} \quad G\left(1,t+q_2,\sum_{\ind =1}^\infty a_\ind t^\ind \right)=0,
\end{equation}
in this same neighborhood. Since $I_{[1:0:0]}(\tilde{F},\tilde{G})=I_q(F,G)$, by Proposition~\ref{Prop:ContactOrderMult} we can compute the valuation of the series $\alpha-\beta$ to determine $I_q(F,G)$. Differentiating the expressions in \eqref{Eq:SeriesExp} with respect to $t$, then substituting $t=0$ yields
$$
F_y(q)+F_z(q)a_1=0\quad \text{and}\quad G_y(q)+G_z(q)b_1=0.
$$
Thus $a_1=\frac{-F_y(q)}{F_z(q)}$ and $b_1=\frac{-G_y(q)}{G_z(q)}$, and $(F_yG_z-F_zG_y)(q)=0$ implies that $a_1-b_1=0$. By differentiating \eqref{Eq:SeriesExp} twice with respect to $t$ and substituting these values for $a_1$ and $b_1$, one can similarly show that
\begin{align}\label{Eq:SecDerivative}
a_2&=\left.\left(\frac{-F_{yy}F_z^2+2F_{yz}F_yF_z-F_{zz}F_y^2}{2F_z^3}\right)\right|_{q}\nonumber\\
b_2&=\left.\left(\frac{-G_{yy}G_z^2+2G_{yz}G_yG_z-G_{zz}G_y^2}{2G_z^3}\right)\right|_q.
\end{align}

Since we know that $a_1-b_1=0$, the valuation of $\alpha - \beta = \sum_{\ind =1}^\infty (a_\ind -b_\ind )t^\ind$ is \textit{at least} two; we now show that the valuation is  \emph{exactly} two for generic $A$, $B$ and $S$. We verified that $a_2-b_2\neq 0$ with the help of the computer algebra system \emph{Maple}~\cite{maple} by the following steps. First, we computed all second-order derivatives of $F$ and $G$ with respect to $y$ and $z$ in terms of partial derivatives of $P$ and $T$, by differentiating the expressions in \eqref{Eq:FirstDerivative}. Then, we substituted $P=0$ and $z=0$ in these expressions, which corresponds to evaluation at $q$. Thus from \eqref{Eq:SecDerivative} we can obtain expressions for $a_2$ and $b_2$ evaluated at $q$ in terms of partial derivatives of $P$ and $T$. Next we cleared denominators in the resulting expression for $a_2-b_2$, yielding
$$
(a_2-b_2)(q) = (T^4P_x^2P_y^4(P_yT_x-P_xT_y))(1,q_2).
$$
Since $P(1,q_2)=0,$ this expression does not generically evaluate to $0$ by Lemmas~\ref{Lem:EmptyVarieties} and~\ref{Lem:EmptyVarietyProjPoints}.
\end{proof}

\begin{cor}\label{Cor:IntersectionAtInfinity}
For generic  $A, B$ and $S$, we have $\sum_{q\in V(F,G,z)}I_q(F,G)=2n.$ 
\end{cor}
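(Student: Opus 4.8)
The plan is to read off the corollary directly from the two preceding lemmas, since the quantity in question is simply a sum of per-point multiplicities over a finite index set whose size and summands have both already been pinned down. First I would invoke Lemma~\ref{Lem:ExactlyNPts}, which guarantees that for generic $A$, $B$ and $S$ the projective variety $V(F,G,z)$ is a finite set of exactly $n$ distinct points, each of the form $[q_1:q_2:0]$ with $P(q_1,q_2)=0$. Next I would invoke Lemma~\ref{Lem:MultTwo}, which shows that at every such point $q$ the intersection multiplicity satisfies $I_q(F,G)=2$. With both facts in hand the sum collapses to a count: summing the constant value $2$ over the $n$ points of $V(F,G,z)$ gives $\sum_{q\in V(F,G,z)} I_q(F,G) = n\cdot 2 = 2n$, as claimed.

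The only point requiring care is the genericity bookkeeping, and this is precisely the kind of situation handled by the discussion at the end of Section~\ref{generic-properties}. Lemma~\ref{Lem:ExactlyNPts} holds on some non-empty Zariski-open subset $U_1$ of the parameter space, and Lemma~\ref{Lem:MultTwo} holds on some non-empty Zariski-open subset $U_2$; to assert both conclusions simultaneously I would restrict to the intersection $U_1\cap U_2$, which is again non-empty and open. On this common open set the point count of Lemma~\ref{Lem:ExactlyNPts} and the uniform multiplicity of Lemma~\ref{Lem:MultTwo} are both valid, so the numerical identity follows for generic $A$, $B$ and $S$. I do not anticipate any genuine obstacle here: all the substantive work — establishing that there are exactly $n$ intersection points at infinity and that each is a point of tangency of order one — has already been carried out in Lemmas~\ref{Lem:ExactlyNPts} and~\ref{Lem:MultTwo}, and the corollary is the immediate arithmetic consequence.
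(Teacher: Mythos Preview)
Your proposal is correct and follows exactly the paper's own approach: the corollary is obtained by combining Lemma~\ref{Lem:ExactlyNPts} (there are $n$ points in $V(F,G,z)$) with Lemma~\ref{Lem:MultTwo} (each has intersection multiplicity $2$), yielding $2n$. The paper's proof is even terser than yours, simply citing the two lemmas without spelling out the genericity bookkeeping you (correctly) mention.
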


\begin{proof}
This follows from Lemmas \ref{Lem:ExactlyNPts} and \ref{Lem:MultTwo}.
\end{proof}

Now we can prove our main result that $\deg(J) = 2n-3$:

\begin{proof}[Proof of Theorem~\ref{Thm:Main}]
Combining Proposition~\ref{Prop:RealRoadmapProp} with Corollaries~\ref{Thm:OriginMult} and~\ref{Cor:IntersectionAtInfinity} shows that the ML-degree of $\mathcal M_{A,B}$ for generic $A$ and $B$ is
\[
	(2n-1)^2 - (2n-2)^2 - 2n = 2n-3. \qedhere
\]
\end{proof}

\section{Discussion}

In \cite{sturmfels2019}, Sturmfels, Timme and Zwiernik use numerical algebraic geometry methods implemented in the Julia package \texttt{LinearGaussianCovariance.jl} to compute the ML-degrees of linear Gaussian covariance models for several values of $n$ and $m$, where $n$ is the size of the covariance matrix and $m$ is the dimension of model. We have proven that for $m=2$ and arbitrary $n$, the ML-degree is $2n-3$, which agrees with the computations in Table 1 of \cite{sturmfels2019}.

For higher dimensional linear spaces, where $m>2$, the score equations consist of the partial derivatives of $\tilde\ell$ with respect to the $m$ parameters of the linear space. Again, in this case, these are rational functions of the data and the parameters. For instance when $m=3$, we can consider the linear span of three $n\times n$ matrices $A,B$, and $C$. Then if $\Sigma=xA+yB+zC$, $P=\det\Sigma$ and $T=\tr(S\adj\Sigma)$, the score equations are
\begin{align*}
\tilde\ell_x(x,y,z) &= \frac{P_x}{P} + \frac{P T_x - T P_x}{P^2} \\
\tilde\ell_y(x,y,z) &= \frac{P_y}{P} + \frac{P T_y - T P_y}{P^2}\\
\tilde\ell_z(x,y,z) &= \frac{P_z}{P} + \frac{P T_z - T P_z}{P^2},
\end{align*}
and we can similarly define polynomials
\begin{align*}
f(x,y,z) &:= P P_x + P T_x - T P_x \\
g(x,y,z) &:= P P_y + P T_y - T P_y\\
h(x,y,z) &:= P P_z + P T_z - T P_z,
\end{align*}
such that the ML-degree of the model is the degree of $J=\mathcal{I}(f,g,h) :\langle \det\Sigma\rangle^\infty$. The authors of \cite{sturmfels2019} conjecture that the ML-degree in this case is $3n^2-9n+7$. To prove this conjecture as we did for $m=2$, one might turn to a higher dimensional generalization of B\'ezout's Theorem, which says that the number of solutions to $V(f,g,h)$ counted with multiplicity is the product $\deg(f)\deg(g)\deg(h)$ \emph{provided that $V(f,g,h)$ is zero-dimensional} (see for example \cite[Sec. 3, Ch. 3]{cox1998} or \cite[Sec. 2.1, Ch. 3]{shafarevich2013}). This zero-dimensionality restriction is necessary for equality, otherwise the product of the degrees in this case simply gives an upper bound for the number of zero-dimensional solutions counted with multiplicity \cite[Thm. 12.3]{fulton1998}.

Indeed the variety $V(f,g,h)$ contains the one-dimensional affine variety $V(P,T)$ as well as a ``curve at infinity'' corresponding to the vanishing of $P$. When $m=2$, the variety $V(P,T)\subset \C^2$ consisted of only the origin and the elements at infinity were points whose multiplicity we were able to ascertain using properties of curves. This illustrates the added difficulties in counting solutions when moving from planar intersection theory to higher dimensional intersections. 

The authors of \cite{sturmfels2019} also consider the \emph{generic diagonal model}, in which the linear space that comprises the model consists of diagonal matrices. Their computations show that for $m=2$, the ML-degree of the generic diagonal model for the first several values of $n$ is also $2n-3$, see \cite[Table 2]{sturmfels2019}. It follows from the proof of our result that this ML-degree is indeed $2n-3$ for all $n$, as the witnesses for the non-emptiness of the open dense sets that we produced in the proof of Lemma~\ref{Lem:EmptyVarieties} were all diagonal matrices.  For $m > 2$ and $n > 3$, the ML-degree of the generic diagonal model is conjectured in  \cite{sturmfels2019} to be strictly less than the corresponding generic linear Gaussian covariance model. This suggests that the study of linear Gaussian covariance models of arbitrary dimension will require us to look beyond diagonal matrices as witnesses to the non-triviality of some open conditions. 

Indeed, many of the projective varieties in Lemma \ref{Lem:EmptyVarieties} are nonempty for diagonal matrices when $m>2$. 
For example, when $m \geq 3$, the determinant of $P$ for a diagonal $\Sigma$ has a nonempty singular locus. 
Let $m = 3$ and let
\[
\Sigma = xA + yB + zC
\]
where $A$, $B$, and $C$ are the diagonal matrices with diagonal entries $(a_1, \dots, a_n)$, $(b_1,\dots, b_n)$ and $(c_1,\dots,c_n)$, respectively.
Then we have
\[
P = \prod_{i=1}^n (a_ix + b_iy +c_iz).
\]
The derivatives of $P$ are of the form
\[
P_x = \sum_{i=1}^n a_i \prod_{\substack{j=1 \\ j \neq i}}^n (a_j x + b_j y + c_jz),
\]
and similarly for $P_y$ and $P_z$.
So any projective point in the intersection of linear spaces of the form
\[
V(a_ix + b_iy +c_iz) \cap V(a_j x + b_j y + c_jz)
\]
for $i \neq j$ is a singular point of $P$.
When $m > 2$, these intersections are nonempty, so such singular points exist.

Thus, when $\Sigma$ is not defined by diagonal matrices, the problem of finding witnesses to the emptiness of the varieties in Lemma \ref{Lem:EmptyVarieties} for arbitrary $n$ is non-trivial, which adds another layer of difficulty for establishing the ML-degree when $m > 3$. Nevertheless we believe that examining the structure of the score equations for $m=2$ provides a possible blueprint for approaching the problem for $m > 2$, although it will require different tools from intersection theory.

For the purposes of statistical inference, one is most interested in \emph{real} solutions to the score equations,
as these are the ones that may have statistical meaning.
Furthermore, it would be nice to understand whether there are truly $2n-3$ distinct (complex) solutions to the score equations,
as opposed to some having higher multiplicity. Based on the examples we have computed, we conjecture an affirmative answer. We thus still have the following open questions regarding the $m=2$ case.

\begin{prob}
How many real solutions can the score equations of a generic two-dimensional linear Gaussian covariance model have?
\end{prob}

\begin{conj}
For generic values of $A,B$ and $S$, the score equations of $\mathcal{M}_{A,B}$ with sample covariance matrix $S$ have
$2n-3$ \emph{distinct} solutions.
\end{conj}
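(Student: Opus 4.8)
The statement asserts that the open genericity condition already exploited throughout the paper can be strengthened to force all $2n-3$ critical points to be simple. Since Theorem~\ref{Thm:Main} gives $\deg(J)=2n-3$, and the degree of the zero-dimensional ideal $J$ equals the sum of the intersection multiplicities $I_q(f,g)$ over the affine critical points $q\in V(J)=V(f,g)\setminus\{(0,0)\}$ (Proposition~\ref{Prop:SaturateOrigin}), the Conjecture is equivalent to the assertion that $I_q(f,g)=1$ at every such $q$ for generic $A,B$ and $S$. By the gradient criterion recorded in the remark following Proposition~\ref{Prop:ContactOrderMult}, this is in turn equivalent to the nonvanishing of the Jacobian determinant $D:=f_xg_y-f_yg_x$ at every point of $V(f,g)\setminus\{(0,0)\}$. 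So the plan is to prove that, for generic parameters, $D$ does not vanish simultaneously with $f$ and $g$ away from the origin.

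The natural tool is a dimension count on the incidence variety, with the sample covariance matrix $S$ taken as the genuine variable. The point is that $T=\tr(S\adj\Sigma)$ is \emph{linear} in the entries of $S$, so that $f$ and $g$ in~\eqref{Eqn:fg} are affine-linear in $S$ while $D$ is quadratic in $S$. For a fixed generic $(x,y)\in\C^2$ the two conditions $f(x,y)=0$ and $g(x,y)=0$ are then two linear conditions on $S$; writing $f=PP_x+\langle S,\,P\,\partial_x\adj\Sigma-P_x\adj\Sigma\rangle$ and similarly for $g$, these conditions are independent provided the symmetric matrices $P\,\partial_x\adj\Sigma-P_x\adj\Sigma$ and $P\,\partial_y\adj\Sigma-P_y\adj\Sigma$ are linearly independent, which I expect to verify generically by the same witness technique used in Lemma~\ref{Lem:EmptyVarieties}. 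Granting this, the locus $\{S:f=g=0\}$ has codimension two for fixed generic $(x,y)$, and the extra requirement $D=0$ cuts the dimension down by one more as soon as $D$ is not identically zero on that locus.

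Assembling these fibers, consider the incidence variety
\[
W=\{(x,y,S):f(x,y)=g(x,y)=D(x,y)=0,\ (x,y)\neq(0,0)\}\subset \C^2\times\C^{\binom{n+1}{2}}.
\]
If the three conditions impose codimension three for generic $(x,y)$, then $\dim W\le \binom{n+1}{2}-1$, so the image of $W$ under projection to the $S$-space is contained in a proper Zariski-closed subset. Intersecting its complement with the open sets furnished by Lemma~\ref{Lem:EmptyVarieties} and Proposition~\ref{Prop:SaturateOrigin}, and with the analogous open conditions in $A,B$, yields a nonempty open set on which every critical point is simple; combined with $\deg(J)=2n-3$ this forces exactly $2n-3$ distinct solutions. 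To certify nonemptiness for every $n$ simultaneously, I would produce an explicit uniform witness, the diagonal models of Lemma~\ref{Lem:EmptyVarieties} being the natural candidates, and check that their $2n-3$ critical points are pairwise distinct.

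The main obstacle is the codimension-three claim, specifically showing that $D$ is not forced to vanish on the surface $\{f=g=0\}$. Because $f$, $g$ and $D$ all descend from the single log-likelihood $\tilde\ell$, there is a real danger of a hidden algebraic dependency among them that would collapse the expected dimension; ruling this out—equivalently, exhibiting for generic $(x,y)$ a single $S$ with $f=g=0$ but $D\neq0$—is the crux, and is likely where a careful symbolic computation, analogous to the one invoked in Lemma~\ref{Lem:MultTwo}, will be unavoidable.
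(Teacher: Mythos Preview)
The paper does not prove this statement: it is explicitly posed as an open \emph{Conjecture} at the end of the Discussion, supported only by computed examples. There is therefore no proof in the paper to compare your proposal against.

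As for the proposal itself, you have written a strategy, not a proof, and you say so. The reduction to the nonvanishing of the Jacobian $D=f_xg_y-f_yg_x$ on $V(f,g)\setminus\{(0,0)\}$ is correct, and the incidence-variety setup over the $S$-parameters is a natural line of attack because $f,g$ are affine-linear and $D$ quadratic in $S$. But the entire argument hinges on the step you yourself flag as ``the crux'': that for fixed generic $(x,y)$ the condition $D=0$ is not implied by $f=g=0$ as polynomial identities in $S$. You neither verify this nor indicate a concrete mechanism for doing so; the vague appeal to a symbolic computation ``analogous to'' Lemma~\ref{Lem:MultTwo} does not suffice, since that lemma deals with a fixed finite list of points at infinity whereas here one must handle a moving point $(x,y)$. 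In addition, your closing witness step---checking that the diagonal models have $2n-3$ distinct critical points for \emph{every} $n$---is asserted but not carried out, and is itself a nontrivial uniform statement. Until both of these are resolved, the proposal remains a plausible outline for an attack on an open problem, not a proof.
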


\section*{Acknowledgements}
The authors would like to thank Carlos Amendola, Irina Kogan, Emre Sert\"oz, Bernd Sturmfels, Seth Sullivant, Sascha Timme, Caroline Uhler, Cynthia Vinzant and Piotr Zwiernik for many helpful conversations. We would also like to thank the anonymous reviewers for their detailed comments on the manuscript. All three authors were supported by the Max-Planck-Institute for Mathematics in the Sciences. Jane Coons was supported by the US National Science Foundation (DMS 1615660).

\bibliographystyle{acm}
\bibliography{small-covariance}

\begin{thebibliography}{10}

\bibitem{anderson1970}
{\sc Anderson, T.~W.}
\newblock Estimation of covariance matrices which are linear combinations or
  whose inverses are linear combinations of given matrices.
\newblock {\em Essays in probability and statistics\/} (1970), 1--24.

\bibitem{anderson1973}
{\sc Anderson, T.~W.}
\newblock Asymptotically efficient estimation of covariance matrices with
  linear structure.
\newblock {\em The Annals of Statistics 1}, 1 (1973), 135--141.

\bibitem{maple}
{\sc Bernardin, L., Chin, P., DeMarco, P., Geddes, K.~O., Hare, D. E.~G., Heal,
  K.~M., Labahn, G., May, J.~P., McCarron, J., Monagan, M.~B., Ohashi, D., and
  Vorkoetter, S.~M.}
\newblock {\em Maple Programming Guide}.
\newblock Maplesoft, 2019.

\bibitem{bien2011}
{\sc Bien, J., and Tibshirani, R.~J.}
\newblock Sparse estimation of a covariance matrix.
\newblock {\em Biometrika 98}, 4 (2011), 807--820.

\bibitem{Jul:HC}
{\sc Breiding, P., and Timme, S.}
\newblock {H}omotopy{C}ontinuation.jl: A package for homotopy continuation in
  {J}ulia.
\newblock In {\em International Congress on Mathematical Software\/} (2018),
  Springer, pp.~458--465.

\bibitem{butte2000}
{\sc Butte, A.~J., Tamayo, P., Slonim, D., Golub, T.~R., and Kohane, I.~S.}
\newblock Discovering functional relationships between {R}{N}{A} expression and
  chemotherapeutic susceptibility using relevance networks.
\newblock {\em Proceedings of the National Academy of Sciences 97}, 22 (2000),
  12182--12186.

\bibitem{bernd-maximum-likelihood-degree}
{\sc Catanese, F., Hoşten, S., Khetan, A., and Sturmfels, B.}
\newblock The maximum likelihood degree.
\newblock {\em American Journal of Mathematics 128}, 3 (2006), 671--697.

\bibitem{chaudhuri2007}
{\sc Chaudhuri, S., Drton, M., and Richardson, T.~S.}
\newblock Estimation of a covariance matrix with zeros.
\newblock {\em Biometrika 94}, 1 (2007), 199--216.

\bibitem{cox1992}
{\sc Cox, D., Little, J., and O'Shea, D.}
\newblock {\em Ideals, varieties, and algorithms}.
\newblock Undergraduate Texts in Mathematics. Springer-Verlag, New York, 1992.
\newblock An introduction to computational algebraic geometry and commutative
  algebra.

\bibitem{cox1998}
{\sc Cox, D., Little, J., and O'Shea, D.}
\newblock {\em Using algebraic geometry}, vol.~185 of {\em Graduate Texts in
  Mathematics}.
\newblock Springer-Verlag, New York, 1998.

\bibitem{drton2008}
{\sc Drton, M., Sturmfels, B., and Sullivant, S.}
\newblock {\em Lectures on algebraic statistics}, vol.~39.
\newblock Springer Science \& Business Media, 2008.

\bibitem{felsenstein1973}
{\sc Felsenstein, J.}
\newblock Maximum-likelihood estimation of evolutionary trees from continuous
  characters.
\newblock {\em American journal of human genetics 25}, 5 (1973), 471.

\bibitem{fischer2001}
{\sc Fischer, G.}
\newblock {\em Plane algebraic curves}, vol.~15 of {\em Student Mathematical
  Library}.
\newblock American Mathematical Society, Providence, RI, 2001.
\newblock Translated from the 1994 German original by Leslie Kay.

\bibitem{fulton1989}
{\sc Fulton, W.}
\newblock {\em Algebraic curves: an introduction to algebraic geometry}.
\newblock Addison-Wesley, 1989.

\bibitem{fulton1998}
{\sc Fulton, W.}
\newblock {\em Intersection theory}, second~ed., vol.~2 of {\em Ergebnisse der
  Mathematik und ihrer Grenzgebiete. 3. Folge. A Series of Modern Surveys in
  Mathematics [Results in Mathematics and Related Areas. 3rd Series. A Series
  of Modern Surveys in Mathematics]}.
\newblock Springer-Verlag, Berlin, 1998.

\bibitem{harris}
{\sc Harris, J.}
\newblock {\em Algebraic geometry: a first course}, vol.~133 of {\em Graduate
  Texts in Mathematics}.

\bibitem{jiang2017}
{\sc Jiang, H.}
\newblock {\em Modeling Trait Evolutionary Processes with More Than One Gene}.
\newblock PhD thesis, University of New Mexico, 2017.

\bibitem{popovvinberg}
{\sc Popov, V.~L., and Vinberg, E.~B.}
\newblock Invariant theory.
\newblock In {\em Algebraic Geometry IV}, A.~N. Parshin and I.~R. Shafarevich,
  Eds., vol.~55. Spring-Verlag, 1994, pp.~122--278.

\bibitem{rothman2010}
{\sc Rothman, A.~J., Levina, E., and Zhu, J.}
\newblock A new approach to {C}holesky-based covariance regularization in high
  dimensions.
\newblock {\em Biometrika 97}, 3 (2010), 539--550.

\bibitem{rutter2000}
{\sc Rutter, J.~W.}
\newblock {\em Geometry of curves}.
\newblock Chapman \& Hall/CRC Mathematics. Chapman \& Hall/CRC, Boca Raton, FL,
  2000.

\bibitem{shafarevich2013}
{\sc Shafarevich, I.~R.}
\newblock {\em Basic algebraic geometry. 1}, third~ed.
\newblock Springer, Heidelberg, 2013.
\newblock Varieties in projective space.

\bibitem{SturmfelsInvTh}
{\sc Sturmfels, B.}
\newblock {\em Algorithms in invariant theory}, second~ed.
\newblock Texts and Monographs in Symbolic Computation. Springer Wien New York,
  Vienna, 2008.

\bibitem{sturmfels2019}
{\sc {Sturmfels}, B., {Timme}, S., and {Zwiernik}, P.}
\newblock {Estimating linear covariance models with numerical nonlinear
  algebra}.
\newblock arXiv:1909.00566.

\bibitem{sullivant2018}
{\sc Sullivant, S.}
\newblock {\em Algebraic Statistics}, vol.~194.
\newblock American Mathematical Soc., 2018.

\bibitem{wu2012}
{\sc Wu, W.~B., and Xiao, H.}
\newblock Covariance matrix estimation in time series.
\newblock In {\em Handbook of Statistics}, vol.~30. Elsevier, 2012,
  pp.~187--209.

\bibitem{zwiernik2017}
{\sc Zwiernik, P., Uhler, C., and Richards, D.}
\newblock Maximum likelihood estimation for linear {G}aussian covariance
  models.
\newblock {\em Journal of the Royal Statistical Society: Series B (Statistical
  Methodology) 79}, 4 (2017), 1269--1292.

\end{thebibliography}

\vfill
\footnotesize \textbf{Authors' affilitiations:}\\
Jane Ivy Coons, North Carolina State University
\hfill {\tt jicoons@ncsu.edu}\\
Orlando Marigliano, MPI MiS Leipzig
\hfill {\tt orlando.marigliano@mis.mpg.de}\\
Michael Ruddy, MPI MiS Leipzig
\hfill  {\tt michael.ruddy@mis.mpg.de}

\end{document}